\def\pmod #1{\ ({\rm{mod}}\ #1)}
\def\bg{\bigg}
\def\({\bg(}
\def\){\bg)}
\def\gs{\geqslant}
\def\bi{\binom}
\def\eq{\equiv}
\def\Ack{\medskip\noindent {\bf Acknowledgment}}
\theoremstyle{plain}
\newtheorem{theorem}{Theorem}
\newtheorem{lemma}{Lemma}
\newtheorem{conjecture}{Conjecture}
\theoremstyle{definition}
\theoremstyle{remark}
\begin{document}
\title
[{On two congruence conjectures of Z.-W. Sun}]
{On two congruence conjectures of Z.-W. Sun involving Franel numbers}

\author
[Guo-Shuai Mao and Yan Liu] {Guo-Shuai Mao and Yan Liu}

\address {(Guo-Shuai Mao) Department of Mathematics, Nanjing
University of Information Science and Technology, Nanjing 210044,  People's Republic of China\\
{\tt maogsmath@163.com  } }

\address{(Yan Liu) Department of Mathematics, Nanjing
University of Information Science and Technology, Nanjing 210044,  People's Republic of China\\
{\tt 1325507759@qq.com  } }

\keywords{Congruences; franel numbers; $p$-adic Gamma function; Gamma function.
\newline \indent {\it Mathematics Subject Classification}. Primary 11A07; Secondary 05A10; 05A19; 33B15.
\newline \indent The first author is the corresponding author. This research was supported by the Natural Science Foundation of China (grant 12001288).}

 \begin{abstract} In this paper, we mainly prove the following conjectures of Z.-W. Sun \cite{S13}:
 Let $p>2$ be a prime. If $p=x^2+3y^2$ with $x,y\in\mathbb{Z}$ and $x\equiv1\pmod 3$, then
 $$
 x\equiv\frac14\sum_{k=0}^{p-1}(3k+4)\frac{f_k}{2^k}\equiv\frac12\sum_{k=0}^{p-1}(3k+2)\frac{f_k}{(-4)^k}\pmod{p^2},
 $$
 and if $p\equiv1\pmod3$, then
 $$
 \sum_{k=0}^{p-1}\frac{f_k}{2^k}\equiv\sum_{k=0}^{p-1}\frac{f_k}{(-4)^k}\pmod{p^3},
 $$
 where $f_n=\sum_{k=0}^n\binom{n}k^3$ stands for the $n$th Franel number.
\end{abstract}

\maketitle

\section{Introduction}
\setcounter{lemma}{0}
\setcounter{theorem}{0}
\setcounter{corollary}{0}
\setcounter{remark}{0}
\setcounter{equation}{0}
\setcounter{conjecture}{0}
In 1894, Franel \cite{F} found that the numbers
$$
f_n=\sum_{k=0}^n\binom{n}k^3\ \ (n=0,1,2,\ldots)
$$
satisfy the recurrence relation (cf. \cite[A000172]{SL}):
$$
(n+1)^2f_{n+1}=(7n^2+7n+2)f_n+8n^2f_{n-1}\ \ (n=1,2,3,\ldots).
$$
These numbers are now called Franel numbers. Callan \cite{C} found a combinatorial interpretation of the Franel numbers. The Franel numbers play important roles in combinatorics and number theory.  The sequence $(f_n)_{n\gs0}$ is one of the five sporadic sequences (cf. \cite[Section 4]{Z})
which are integral solutions of certain Ap\'ery-like recurrence equations and closely related to the theory of modular forms.
In 2013, Sun \cite{S13} revealed some unexpected connections between the numbers $f_n$ and representations of primes $p\eq1\pmod3$ in the form $x^2+3y^2$ with $x,y\in\mathbb{Z}$, for example, Z.-W. Sun \cite[(1.2)]{S13} showed that
\begin{equation}\label{fp2}
\sum_{k=0}^{p-1}\frac{f_k}{2^k}\equiv\sum_{k=0}^{p-1}\frac{f_k}{(-4)^k}\equiv 2x-\frac{p}{2x}\pmod{p^2},
\end{equation}
and in the same paper, Sun proposed some conjectures involving Franel numbers, one of which is
\begin{conjecture}\label{Conj1.1} Let $p>2$ be a prime. If $p=x^2+3y^2$ with $x,y\in\mathbb{Z}$ and $x\equiv1\pmod 3$, then
$$x\equiv\frac14\sum_{k=0}^{p-1}(3k+4)\frac{f_k}{2^k}\equiv\frac12\sum_{k=0}^{p-1}(3k+2)\frac{f_k}{(-4)^k}\pmod{p^2}.$$
\end{conjecture}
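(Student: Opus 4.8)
The plan is to peel off the linear weight and reduce the whole statement to the known evaluation \eqref{fp2} together with a small amount of genuinely new arithmetic. Write $\Sigma_j=\sum_{k=0}^{p-1}k^j\frac{f_k}{2^k}$ and $\Sigma_j'=\sum_{k=0}^{p-1}k^j\frac{f_k}{(-4)^k}$. Then $\frac14\sum_{k=0}^{p-1}(3k+4)\frac{f_k}{2^k}=\frac34\Sigma_1+\Sigma_0$ and $\frac12\sum_{k=0}^{p-1}(3k+2)\frac{f_k}{(-4)^k}=\frac32\Sigma_1'+\Sigma_0'$. Since \eqref{fp2} gives $\Sigma_0\equiv\Sigma_0'\equiv 2x-\frac{p}{2x}\pmod{p^2}$, the theorem is equivalent to the two supercongruences $\Sigma_1\equiv\frac43\big(-x+\frac{p}{2x}\big)$ and $\Sigma_1'\equiv\frac23\big(-x+\frac{p}{2x}\big)\pmod{p^2}$. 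In particular it suffices to evaluate the two $k$-weighted Franel sums modulo $p^2$.

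First I would extract exact finite identities for the quadratically weighted sums from Franel's recurrence $(n+1)^2f_{n+1}=(7n^2+7n+2)f_n+8n^2f_{n-1}$. Using the summation-by-parts ansatz $U(k)=k^2\frac{f_k}{z^k}+c_z\,k^2\frac{f_{k-1}}{z^{k-1}}$ and matching coefficients of $f_k,f_{k-1}$ through the recurrence, one verifies
\begin{align*}
\sum_{k=0}^{n-1}(3k^2+5k+2)\frac{f_k}{2^k}&=\frac{2n^2}{3}\Big(\frac{f_n}{2^n}+2\frac{f_{n-1}}{2^{n-1}}\Big),\\
\sum_{k=0}^{n-1}k(3k+1)\frac{f_k}{(-4)^k}&=-\frac{4n^2}{3}\Big(\frac{f_n}{(-4)^n}+\frac12\frac{f_{n-1}}{(-4)^{n-1}}\Big).
\end{align*}
Setting $n=p$ and using that $f_{p-1},f_p\in\Z$ while $2,-4$ are units in $\Z_p$, the right-hand sides are divisible by $p^2$, so
\begin{align*}
3\Sigma_2+5\Sigma_1+2\Sigma_0&\equiv0\pmod{p^2}, & 3\Sigma_2'+\Sigma_1'&\equiv0\pmod{p^2}.
\end{align*}
These relations express $\Sigma_1,\Sigma_1'$ through the single new quantities $\Sigma_2,\Sigma_2'$ and the known $\Sigma_0$. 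One cannot avoid such a new input: no polynomial certificate telescopes a linear weight (the minimal telescoping weight is quadratic, as above), so the boundary values $f_{p-1},f_p$ alone cannot produce $\Sigma_1$, and exactly one genuine $p$-adic evaluation remains.

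The arithmetic heart, which I expect to be the main obstacle, is the mod-$p^2$ evaluation of that remaining weighted sum in terms of $x$. Here I would pass to the $p$-adic Gamma function: expand $f_k=\sum_j\bi{k}{j}^3$, rewrite the binomials through $\Gamma_p$, and split the summation according to the residues of the indices modulo $3$ (this is where $p\equiv1\pmod3$, i.e. $p=x^2+3y^2$, enters). After discarding the $O(p^2)$ terms, the surviving contribution should collapse to a central binomial coefficient of the form $\bi{2(p-1)/3}{(p-1)/3}$, whose value modulo $p^2$ is governed by a cubic Jacobi sum and hence, via Gross--Koblitz, by $\Gamma_p(1/3)$. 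Invoking the classical evaluation of that Jacobi sum, normalized by $x\equiv1\pmod3$ to fix the sign, then identifies the sum with the required combination of $x$ and $p/x$. The delicate points will be controlling the $p$-adic Gamma expansions to second order (the $\frac{p}{2x}$ term lives precisely at that level) and pinning the sign so that one obtains exactly $x$ rather than $-x$; once these evaluations are in hand, the reduction of the first two paragraphs is routine bookkeeping.
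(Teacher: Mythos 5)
Your opening two paragraphs are correct. Both telescoping identities do follow from Franel's recurrence: writing $T_n=\frac{2n^2}{3}\cdot\frac{f_n+4f_{n-1}}{2^n}$ and $U_n=-\frac{4n^2}{3}\cdot\frac{f_n-2f_{n-1}}{(-4)^n}$, the recurrence gives $T_{n+1}-T_n=(3n^2+5n+2)\frac{f_n}{2^n}$ and $U_{n+1}-U_n=n(3n+1)\frac{f_n}{(-4)^n}$, so setting $n=p$ yields $3\Sigma_2+5\Sigma_1+2\Sigma_0\equiv0$ and $3\Sigma_2'+\Sigma_1'\equiv0\pmod{p^2}$, and together with \eqref{fp2} the conjecture is indeed equivalent to the stated evaluations of $\Sigma_1$ and $\Sigma_1'$. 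But this reduction buys nothing arithmetically: each relation is a single congruence in \emph{two} unknowns ($\Sigma_1,\Sigma_2$, respectively $\Sigma_1',\Sigma_2'$), so you have only traded the linear-weight Franel sum for a quadratic-weight one of exactly the same nature and difficulty. As you yourself concede, ``exactly one genuine $p$-adic evaluation remains'' --- and that evaluation \emph{is} the theorem. Everything up to that point is bookkeeping around the known congruence \eqref{fp2}.

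The genuine gap is that your third paragraph never performs this evaluation; it is a statement of intent (``should collapse'', ``I expect'', ``invoking the classical evaluation''), not an argument, and the route it sketches is doubtful as stated. Expanding $f_k=\sum_j\binom{k}{j}^3$ and summing over $k$ with weight $k/2^k$ gives a double sum that does not visibly collapse by splitting indices modulo $3$; this is precisely why the paper first applies Sun's transformation (Lemma \ref{Lem2.1}), which rewrites $f_k/2^k$ and $f_k/(-4)^k$ as sums of $\binom{k+j}{3j}\binom{2j}{j}\binom{3j}{j}$ over $j$, so that the sum over $k$ telescopes (via the \texttt{Sigma} identities) to a single term $\binom{p+j}{3j+1}$ carrying an explicit factor of $p$ (Lemma \ref{Lem2.2}). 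Only after that does one reach single sums that can be handled by \eqref{kx}, second-order $\Gamma_p$-expansions \eqref{Gammap2}--\eqref{Gammap'}, Yeung's Proposition 4.1 and Theorem 4.12, and Lemma \ref{sunh}; these produce the terms $4x+3pxq_p(3)-p/x$ and the counterterm $-3pxq_p(3)$, whose Fermat-quotient parts must cancel to leave exactly $4x$. Pinning down the sign of $x$, the $p/(2x)$-level contribution, and these cancellations is exactly what you defer as ``delicate'', and no substitute machinery is offered. Until the remaining weighted sum is actually evaluated modulo $p^2$, what you have is a correct but nonreductive reformulation plus a program, not a proof.
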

\noindent For more studies on Franel numbers, we refer the readers to (\cite{G,G1,Liu,M,m2021,sunh,sun} and so on).

 In this paper, our first goal is to prove the above conjecture.
 \begin{theorem}\label{Th1.1} Conjecture \ref{Conj1.1} is true.
\end{theorem}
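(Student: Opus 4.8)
The plan is to collapse both displayed congruences onto a single analytic computation, the evaluation of a first moment of the Franel numbers. Writing
$$\frac14\sum_{k=0}^{p-1}(3k+4)\frac{f_k}{2^k}=\sum_{k=0}^{p-1}\frac{f_k}{2^k}+\frac34\sum_{k=0}^{p-1}\frac{kf_k}{2^k},\qquad \frac12\sum_{k=0}^{p-1}(3k+2)\frac{f_k}{(-4)^k}=\sum_{k=0}^{p-1}\frac{f_k}{(-4)^k}+\frac32\sum_{k=0}^{p-1}\frac{kf_k}{(-4)^k},$$
the known evaluations \eqref{fp2}, namely $\sum_{k=0}^{p-1}f_k/2^k\equiv\sum_{k=0}^{p-1}f_k/(-4)^k\equiv 2x-p/(2x)\pmod{p^2}$, turn Conjecture \ref{Conj1.1} into the two equivalent assertions
$$\sum_{k=0}^{p-1}\frac{kf_k}{2^k}\equiv-\frac{4x}{3}+\frac{2p}{3x}\pmod{p^2},\qquad \sum_{k=0}^{p-1}\frac{kf_k}{(-4)^k}\equiv-\frac{2x}{3}+\frac{p}{3x}\pmod{p^2}.$$
Thus the entire problem is the computation of these two first moments modulo $p^2$.

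To evaluate a first moment I would first try to remove the factor $k$ by a summation by parts driven by Franel's recurrence $(k+1)^2f_{k+1}=(7k^2+7k+2)f_k+8k^2f_{k-1}$: multiplying by a weight $R(k)/2^k$ and summing over $0\le k\le p-1$ produces, after reindexing, a linear relation among the moments $\sum_k k^jf_k/2^k$ with explicit boundary contributions in $f_{p-1}$ and $f_p$. Since each application of the recurrence raises the moment degree by one, a single relation does not close the system; instead I would pass from the Franel sum to a central-binomial (equivalently $g_k$) sum through Barrucand's identity $\sum_k\binom nk f_k=g_n$ and its inverse, reducing the first moment to a truncated hypergeometric series accessible to the $p$-adic Gamma function. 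The representation $p=x^2+3y^2$ enters exactly here: the classical evaluation of the relevant Jacobi sum — equivalently of a $p$-adic Gamma value such as $\Gamma_p(1/3)$, or of a suitable central binomial coefficient modulo $p^2$ — supplies the quantity $2x$ together with the correction $p/(2x)$ already visible in \eqref{fp2}. Carrying this evaluation through with the extra factor $k$ in place should deliver the right-hand sides $-4x/3+2p/(3x)$ and $-2x/3+p/(3x)$.

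The main obstacle is precision: everything must be done modulo $p^2$, not merely modulo $p$. At the mod-$p$ level the leading terms $-4x/3$ and $-2x/3$ follow from the mod-$p$ Gamma evaluation, but the second-order terms $2p/(3x)$ and $p/(3x)$ require the full $p$-adic expansion of the Gamma values, together with control of the Fermat quotient arising from $2^{p-1}$ and of the harmonic-type sums $\sum 1/k$ and $\sum 1/k^2$ over the relevant range; reconciling all these contributions with the single term $p/(2x)$ is the delicate point. I would treat the cases $2^k$ and $(-4)^k$ in parallel and expect the $(-4)^k$ case to reduce to the $2^k$ case under the same substitution that already identifies the two sums in \eqref{fp2}, so that essentially one $p$-adic Gamma computation, pushed to second order, yields both congruences at once and hence Theorem \ref{Th1.1}.
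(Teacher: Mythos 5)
Your opening reduction is correct and is a genuinely different starting point from the paper: writing $\frac14\sum_{k=0}^{p-1}(3k+4)f_k/2^k=\sum_{k}f_k/2^k+\frac34\sum_{k}kf_k/2^k$ and invoking the known congruence \eqref{fp2} does convert Conjecture \ref{Conj1.1} into the two first-moment evaluations $\sum_{k=0}^{p-1}kf_k/2^k\equiv-\frac43x+\frac{2p}{3x}\pmod{p^2}$ and $\sum_{k=0}^{p-1}kf_k/(-4)^k\equiv-\frac23x+\frac{p}{3x}\pmod{p^2}$, whereas the paper attacks the weighted sums directly and never splits off \eqref{fp2}. But this reduction is the only step you actually complete, and it is the easy one. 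The evaluation of the first moments --- which is the entire content of the theorem --- is never carried out: your first mechanism (summation by parts against Franel's recurrence) is abandoned by your own admission because it raises the moment degree; your second (Barrucand's identity plus ``the classical evaluation of the relevant Jacobi sum'') is not pushed even one step, since no identity is exhibited that turns $\sum_k kf_k/2^k$ into a truncated hypergeometric sum, no $p$-adic Gamma value is actually computed, and the phrases ``should deliver'' and ``I would expect'' stand exactly where the proof must be. Moreover, the difficulty you flag as ``the delicate point'' --- obtaining the second-order terms $2p/(3x)$ and $p/(3x)$ at mod-$p^2$ precision --- is precisely what separates this theorem from a mod-$p$ exercise, and nothing in the proposal addresses it.

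For comparison, what makes the computation closable in the paper is a mechanism your sketch lacks: the identities of Lemma \ref{Lem2.1} rewrite $f_k$ as $\sum_j\binom{k+j}{3j}\binom{2j}j\binom{3j}j2^{k-2j}$ (resp.\ with $\binom{k+2j}{3j}$ and $(-4)^{k-j}$), so that after interchanging summations the inner sum over $k$ has a closed form found by \texttt{Sigma}, namely $\sum_{k=2j}^{p-1}(3k+4)\binom{k+j}{3j}=\frac{9pj+3p+9j+5}{3j+2}\binom{p+j}{3j+1}$; then $\binom{3j}j\binom{p+j}{3j+1}$ carries an explicit factor of $p$ (Lemma \ref{Lem2.2}), which collapses everything to a single truncated sum that identity \eqref{kx} converts into Pochhammer ratios, evaluated to order $p^2$ by the expansions \eqref{Gammap2}--\eqref{Gammap'} and Yeung's theorems. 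If you wanted to salvage your route you would need the analogous closed form for $\sum_{k=2j}^{p-1}k\binom{k+j}{3j}$, which you never produce or replace by anything equivalent. Finally, your expectation that the $(-4)^k$ case ``reduces to the $2^k$ case under the same substitution'' is not borne out: in the paper the two cases are structurally different, because identity \eqref{2.2} forces $j$ to run up to $p-1$, the range $(p+1)/2\le j\le p-1$ needs a separate estimate (Lemma \ref{p2j}), and this produces an extra boundary sum $S_3$ with its own $\Gamma_p$ evaluation; a single Gamma computation does not yield both congruences at once.
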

Z.-W. Sun \cite{S13} also gave the following conjecture which is much difficult and complex.
\begin{conjecture}\label{Conj1.2} Let $p>2$ be a prime. If $p\equiv1\pmod3$, then
$$\sum_{k=0}^{p-1}\frac{f_k}{2^k}\equiv\sum_{k=0}^{p-1}\frac{f_k}{(-4)^k}\pmod{p^3}.$$
\end{conjecture}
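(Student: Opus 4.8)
The plan is to prove the stronger statement that each of the two sums is congruent mod $p^3$ to one and the same explicit $p$-adic quantity, so that Conjecture \ref{Conj1.2} follows by subtraction. By \eqref{fp2} we already know both sums are $\equiv 2x-\tfrac{p}{2x}\pmod{p^2}$, so the entire task is to pin down, and then match, the coefficient of $p^2$. Writing $A=\sum_{k=0}^{p-1}f_k/2^k$ and $B=\sum_{k=0}^{p-1}f_k/(-4)^k$, I would set $A=2x-\tfrac{p}{2x}+p^2\alpha+O(p^3)$ and $B=2x-\tfrac{p}{2x}+p^2\beta+O(p^3)$ with $\alpha,\beta$ $p$-adic integers, and reduce the conjecture to the single congruence $\alpha\equiv\beta\pmod p$ (since then $A-B=p^2(\alpha-\beta)+O(p^3)\equiv0\pmod{p^3}$).

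First I would open up $f_k=\sum_{j=0}^k\binom kj^3$, rewrite the binomials as ratios of factorials, and pass to the $p$-adic Gamma function via the standard relation between $m!$ and $\Gamma_p(m+1)$. The guiding structural fact is the generating-function identity that issues from Barrucand's $g_n=\sum_k\binom nk f_k$, namely $\sum_n f_n t^n=\frac1{1+t}\,G\!\big(\tfrac t{1+t}\big)$ with $G(s)=\sum_n g_n s^n$: the map $t\mapsto t/(1+t)$ carries the two evaluation points $t=\tfrac12$ and $t=-\tfrac14$ to the symmetric pair $s=\tfrac13$ and $s=-\tfrac13$, which is the reason one expects $A$ and $B$ to agree to high $p$-adic order. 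I would make this precise at the level of truncated sums, keeping careful track of the boundary terms near $k=p-1$ that the formal identity ignores (these do not vanish even mod $p$). These boundary corrections, together with the cubic character attached to $p=x^2+3y^2$, are what produce the common leading value $2x-\tfrac p{2x}$ through the Gross--Koblitz formula and the known evaluation of $\Gamma_p$ at thirds.

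The heart of the argument is the second-order expansion. Using $\Gamma_p(a+pt)=\Gamma_p(a)\big(1+G_1(a)\,pt+\tfrac12(G_1(a)^2+G_2(a))\,p^2t^2+\cdots\big)$, where $G_1,G_2$ are the first two $p$-adic log-derivatives of $\Gamma_p$, I would expand $A$ and $B$ to order $p^2$. The $p^0$ and $p^1$ terms reproduce \eqref{fp2} and hence cancel in $A-B$; the residual $p^2$-term reduces to a combination of truncated harmonic sums $\sum 1/k$ and $\sum 1/k^2$ weighted by the two geometric factors $2^{-k}$ and $(-4)^{-k}$, plus a $G_2$-contribution evaluated at thirds. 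I would evaluate these modulo $p$ using Wolstenholme-type congruences and the cubic-residue data, and show that the $2^{-k}$-weighted combination coincides with the $(-4)^{-k}$-weighted one, i.e. $\alpha\equiv\beta\pmod p$.

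I expect the matching of these $p^2$-coefficients to be the main obstacle. Unlike the weighted sums of Theorem \ref{Th1.1}, the unweighted sums admit no clean Gosper/Zeilberger telescoping, so the second-order terms do not collapse for formal reasons; establishing $\alpha\equiv\beta$ appears to require either a delicate new harmonic-sum identity tailored to the weights $2^{-k}$ and $(-4)^{-k}$, or an exact finite transformation realizing the involution $t\mapsto t/(1+t)$ on truncated Franel sums to full $p^3$ precision. Controlling the boundary terms of that truncated transformation to order $p^3$---rather than the order $p^2$ that already suffices for \eqref{fp2}---is where I anticipate the real difficulty to lie.
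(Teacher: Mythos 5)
Your proposal does not contain a proof: after the (correct but routine) reduction to showing that the two $p^2$-coefficients agree, i.e. $\alpha\equiv\beta\pmod p$, you explicitly leave that congruence open, saying it ``appears to require either a delicate new harmonic-sum identity \dots or an exact finite transformation \dots to full $p^3$ precision,'' and that controlling the truncation boundary terms to order $p^3$ is ``where I anticipate the real difficulty to lie.'' That unproved matching \emph{is} the theorem. Everything before it is either already known (the mod-$p^2$ statement \eqref{fp2}) or heuristic: the involution $t\mapsto t/(1+t)$ carrying $\tfrac12,-\tfrac14$ to $\pm\tfrac13$ explains why one might \emph{expect} agreement, but it is a formal generating-function statement, and you supply no mechanism for computing the non-vanishing boundary terms of the truncated transform mod $p^3$, nor for evaluating the second-order $\Gamma_p$ (``$G_2$'') contributions at thirds. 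As written this is a research plan whose decisive step is deferred, not an argument.

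Moreover, the obstacle is misdiagnosed: your claim that ``the unweighted sums admit no clean Gosper/Zeilberger telescoping'' is false after the right preprocessing, and this is exactly the paper's route. Applying Sun's rearrangement (Lemma \ref{Lem2.1}, identities \eqref{2.1}--\eqref{2.2}) converts $f_n$ into sums over $\binom{n+j}{3j}\binom{2j}j\binom{3j}j$ (resp. $\binom{n+2j}{3j}$-type), and then the $k$-sums telescope exactly: $\sum_{k=2j}^{p-1}\binom{k+j}{3j}=\binom{p+j}{3j+1}$ and $\sum_{k=j}^{p-1}\binom{k+2j}{3j}=\binom{p+2j}{3j+1}$. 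The elementary expansions $\binom{3j}j\binom{p+j}{3j+1}\equiv\frac{p}{3j+1}(1-pH_{2j}+pH_j)\pmod{p^3}$ and $\binom{3j}j\binom{p+2j}{3j+1}\equiv\frac{p(-1)^j}{3j+1}(1+pH_{2j}-pH_j)\pmod{p^3}$ (Lemmas \ref{Lem2.2} and \ref{p2j}) then express the \emph{difference} of the two Franel sums, without ever needing closed forms for $\alpha$ and $\beta$ separately, as $2p^2\sum_{j=0}^{(p-1)/2}\binom{(p-1)/2}{j}(-1)^j(H_{2j}-H_j)/(3j+1)+p^2\sum_{j=1}^{(p-1)/2}4^j/\bigl((3j-1)j\binom{2j}j\bigr)+S_5\pmod{p^3}$. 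The ``delicate new harmonic-sum identities'' you hope for are then supplied concretely by \texttt{Sigma} (e.g. \eqref{zhuid}, \eqref{important}) together with Z.-H. Sun's congruences (Lemma \ref{sunh}) and Yeung's binomial congruences \cite{YMK}: both central sums reduce mod $p$ to $\mp\frac13\binom{(p-1)/2}{(p-1)/3}\bigl(\sum_{k=1}^{(p-1)/3}+\sum_{k=1}^{(p-1)/6}\bigr)4^k/(k^2\binom{2k}k)$ and cancel, while the boundary package satisfies $S_5\equiv0\pmod{p^3}$. No Gross--Koblitz formula or second-order $\Gamma_p$ expansion is required. To salvage your approach you would have to actually produce the truncated-transformation identity with $p^3$-precise boundary control, which is precisely the work your proposal postpones.
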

Our last goal is to prove this conjecture.
\begin{theorem}\label{Th1.2} Conjecture \ref{Conj1.2} is true.
\end{theorem}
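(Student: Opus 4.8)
The plan is to wring one extra power of $p$ out of the mod-$p^2$ identity \eqref{fp2}. Write
\[
S_1=\sum_{k=0}^{p-1}\f{f_k}{2^k},\qquad S_2=\sum_{k=0}^{p-1}\f{f_k}{(-4)^k},
\]
so that \eqref{fp2} already gives $S_1\eq S_2\eq 2x-\f{p}{2x}\pmod{p^2}$, and the task is to promote this to a congruence modulo $p^3$. Equivalently, since $\f1{2^k}-\f1{(-4)^k}=\f{2^k-(-1)^k}{4^k}$ (the $k=0$ term cancels), one must show $\sum_{k=1}^{p-1}f_k\,(2^k-(-1)^k)/4^k\eq0\pmod{p^3}$. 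A single Franel term is not Gosper-summable, so neither $S_1$ nor $S_2$ collapses to pure boundary data by telescoping; accordingly I would instead evaluate each of $S_1,S_2$ separately modulo $p^3$ in closed form in terms of $x$ and $p/x$, and then verify that the two closed forms agree.

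To reach such closed forms I would pass from Franel numbers to central-binomial sums, where mod-$p^3$ technology is available. Barrucand's identity $g_n=\sum_{k=0}^n\bi nk f_k$ with $g_n$ as in \eqref{1.4}, inverted to $f_n=\sum_{k=0}^n(-1)^{n-k}\bi nk g_k$, rewrites $S_1$ and $S_2$ as $\sum_{k=0}^{p-1}(-1)^k g_k\sum_{n=k}^{p-1}\bi nk t^n$ with $t=-\tfrac12,-\tfrac14$. Resumming the inner binomial series should reduce $S_1$ and $S_2$, modulo $p^3$ and up to controllable truncation corrections, to $\tfrac23\sum_{k=0}^{p-1}g_k/3^k$ and $\tfrac45\sum_{k=0}^{p-1}g_k/5^k$ respectively, so that the whole claim becomes a supercongruence between these two $g$-sums. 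The truncation corrections are governed by the mod-$p$ vanishing of $\bi{2k}k$ in the upper range of $k$, while the residual second-order Franel data ($f_p,f_{p-1}$) that enters would be pinned down from the recurrence $(n+1)^2f_{n+1}=(7n^2+7n+2)f_n+8n^2f_{n-1}$ together with Theorem~\ref{Th1.1}, which supplies the extra mod-$p^2$ information on the weighted sub-sums $\sum k f_k/c^k$ that appear.

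The decisive step is the evaluation of the central-binomial sums modulo $p^3$ through the $p$-adic Gamma function. Writing $\bi{2k}k=(-4)^k\bi{-1/2}k$ and expressing the resulting products of $p$-adic factorials through $\Gamma_p$, I would invoke the reflection formula $\Gamma_p(x)\Gamma_p(1-x)=\pm1$, the Gauss multiplication formula, and the second-order expansion
\[
\Gamma_p(x+a)\eq\Gamma_p(x)\l(1+G_1(x)\,a+\tfrac12G_2(x)\,a^2\r)\pmod{p^3},\qquad G_i(x)=\Gamma_p^{(i)}(x)/\Gamma_p(x),
\]
specialized at the points $x=\tfrac13,\tfrac23,\tfrac12$ forced by the summation. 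The classical link between $\Gamma_p(1/3),\Gamma_p(2/3)$ and the representation $p=x^2+3y^2$ with $x\eq1\pmod3$ then converts both $g$-sums into expressions in $x$ and $p/x$ modulo $p^3$. \emph{The main obstacle is precisely this last step}: the ordinary mod-$p^2$ evaluations underlying \eqref{fp2} and Theorem~\ref{Th1.1} must be carried one order further, so that the $O(p^2)$ pieces left undetermined there are computed exactly, and one must then check that the harmonic-sum and Fermat/Bernoulli-type corrections appearing separately in the $3^k$- and $5^k$-channels cancel in the difference $S_1-S_2$. Once this cancellation is established, $S_1\eq S_2\pmod{p^3}$ follows, proving Theorem~\ref{Th1.2}.
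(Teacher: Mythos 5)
Your submission is a plan, not a proof: the two steps that carry all the weight are exactly the ones you leave open, and the first of them fails in the form you state it. The ``resummation'' of the inner binomial series is not $p$-adically legitimate, because the sum is finite; exactly, one has
\begin{equation*}
\sum_{n=k}^{p-1}\binom{n}{k}z^n=\frac{z^k\left(1-z^p\right)}{(1-z)^{k+1}}-\sum_{j=1}^{k}\binom{p}{j}\frac{z^{p+k-j}}{(1-z)^{k+1-j}},
\end{equation*}
so besides the geometric main term you acquire (i) the global factor $1-z^p$, which modulo $p^3$ equals $\frac32\left(1-\frac{p}{3}q_p(2)+\frac{p^2}{3}q_p^2(2)\right)$ for $z=-\frac12$ but $\frac54\left(1-\frac{2p}{5}q_p(2)+\frac{3p^2}{5}q_p^2(2)\right)$ for $z=-\frac14$, injecting \emph{unequal} Fermat-quotient corrections at order $p$ into the two channels, and (ii) correction terms carrying $\binom{p}{j}\equiv\frac{p}{j}(-1)^{j-1}\pmod{p^2}$ for $1\le j\le k$, i.e.\ $p$ times a double sum over the $g_k$ that must itself be evaluated modulo $p^2$. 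Neither effect is negligible modulo $p^3$, and neither is ``governed by the mod-$p$ vanishing of $\binom{2k}{k}$'' (no such factor appears; $g_k$ is not divisible by $p$ in the upper range). Hence the statement you reduce to is not the clean supercongruence $\frac23\sum_{k<p}g_k/3^k\equiv\frac45\sum_{k<p}g_k/5^k\pmod{p^3}$ that you announce, but one at least as hard as the theorem itself. Even granting some reduction, you explicitly concede that the decisive step --- the second-order $\Gamma_p$ evaluations and the verification that all harmonic/Fermat/Bernoulli corrections cancel --- is ``the main obstacle'' and you do not carry it out; so nothing is established.

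It is worth seeing why the paper's route succeeds where your frontal attack is painful: the paper never computes either sum to full mod-$p^3$ precision. Using \eqref{2.1} and \eqref{2.2} together with the Sigma identities $\sum_{k=2j}^{p-1}\binom{k+j}{3j}=\binom{p+j}{3j+1}$ and $\sum_{k=j}^{p-1}\binom{k+2j}{3j}=\binom{p+2j}{3j+1}$, both sums become sums over $j$ of $\binom{2j}{j}\binom{3j}{j}\binom{p+j}{3j+1}$-type products, and the expansions of Lemmas \ref{Lem2.2} and \ref{p2j} have the \emph{same} leading term $p/(3j+1)$ up to sign-compatible factors; consequently, in the difference the order-$p$ parts cancel identically, and what survives (see \eqref{zhuyao}) is $p^2$ times harmonic and central-binomial sums that need to be known only modulo $p$, plus boundary terms $S_5$ that are killed by Bernoulli-polynomial congruences. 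Engineering this cancellation \emph{before} any deep evaluation --- rather than evaluating $\sum_{k<p}f_k/2^k$ and $\sum_{k<p}f_k/(-4)^k$ separately in closed form modulo $p^3$, as you propose --- is the idea your outline is missing, and it is what makes the computation finite and feasible.
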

We are going to prove Theorem \ref{Th1.1} in Section 2. Section 3 is devoted to proving Theorem \ref{Th1.2}. Our proofs make use of some combinatorial identities which were found by the package \texttt{Sigma} \cite{S} via the software \texttt{Mathematica} and the $p$-adic Gamma function. The proof of Theorem \ref{Th1.2} is somewhat difficult and complex because it is rather convoluted. Throughout this paper, prime $p$ always $\equiv1\pmod3$, so in the following Lemmas $p>5$ or $p>3$ or $p>2$ is the same, we mention it here first.
 \section{Proof of Theorem 1.1}
 \setcounter{lemma}{0}
\setcounter{theorem}{0}
\setcounter{corollary}{0}
\setcounter{remark}{0}
\setcounter{equation}{0}
\setcounter{conjecture}{0}
For a prime $p$, let  $\mathbb{Z}_p$ denote the ring of all $p$-adic integers and let $\mathbb{Z}_p^{\times}:=\{a\in\mathbb{Z}_p:\,a\text{ is prime to }p\}.$
For each $\alpha\in\mathbb{Z}_p$, define the $p$-adic order $\nu_p(\alpha):=\max\{n\in\mathbb{N}:\, p^n\mid \alpha\}$ and the $p$-adic norm $|\alpha|_p:=p^{-\nu_p(\alpha)}$. Define the $p$-adic gamma function $\Gamma_p(\cdot)$ by
$$
\Gamma_p(n)=(-1)^n\prod_{\substack{1\leq k<n\\ (k,p)=1}}k,\qquad n=1,2,3,\ldots,
$$
and
$$
\Gamma_p(\alpha)=\lim_{\substack{|\alpha-n|_p\to 0\\ n\in\mathbb{N}}}\Gamma_p(n),\qquad \alpha\in\mathbb{Z}_p.
$$
In particular, we set $\Gamma_p(0)=1$. Following, we need to use the most basic properties of $\Gamma_p$, and all of them can be found in \cite{Murty02,Robert00}.
For example, we know that
\begin{equation}\label{Gammap}
\frac{\Gamma_p(x+1)}{\Gamma_p(x)}=\begin{cases}-x,&\text{if }|x|_p=1,\\
-1,&\text{if }|x|_p>1.
\end{cases}
\end{equation}
\begin{align}\label{Gammap1xx}
\Gamma_p(1-x)\Gamma_p(x)=(-1)^{a_0(x)},
\end{align}
where $a_0(x)\in\{1,2,\ldots,p\}$ such that $x\equiv a_0(x)\pmod p$. And a property we need here is the fact that for any positive integer $n$,
\begin{equation}\label{pn}
z_1\equiv z_2\pmod{p^n}\ \ \ \mbox{implies}\ \ \ \Gamma_p(z_1)\equiv\Gamma_p(z_2)\pmod{p^n}.
\end{equation}
\begin{lemma}\label{Lem2.1} {\rm (\cite[Lemma 2.2]{S13})} For any $n\in\mathbb{N}$ we have
 \begin{align}\label{2.1}\sum_{k=0}^n\binom{n}k^3z^k=\sum_{k=0}^{\lfloor n/2\rfloor}\binom{n+k}{3k}\binom{2k}k\binom{3k}kz^k(1+z)^{n-2k}
 \end{align}
 and
 \begin{align}\label{2.2}f_n=\sum_{k=0}^n\binom{n+2k}{3k}\binom{2k}k\binom{3k}k(-4)^{n-k}.
 \end{align}
\end{lemma}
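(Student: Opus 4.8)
The plan is to prove the two displayed identities separately. I would treat \eqref{2.1} as a polynomial identity in $z$ and reduce it to a classical hypergeometric summation, and I would handle \eqref{2.2} by matching it against the Franel recurrence, since (as I explain) it does not reduce to a single closed-form evaluation.

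For \eqref{2.1}, both sides are polynomials in $z$ of degree $n$, so it suffices to compare the coefficient of $z^m$ for each $0\le m\le n$. On the left this coefficient is $\binom nm^3$. On the right I would use the multinomial collapse $\binom{2k}k\binom{3k}k=\frac{(3k)!}{(k!)^3}$, so that $\binom{n+k}{3k}\binom{2k}k\binom{3k}k=\frac{(n+k)!}{(n-2k)!(k!)^3}$, and then expand $(1+z)^{n-2k}=\sum_j\binom{n-2k}{j}z^j$. Extracting $z^m$ and cancelling the factor $(n-2k)!$ turns the right-hand coefficient into
\[
\sum_{k}\frac{(n+k)!}{(k!)^3(m-k)!(n-m-k)!}.
\]
The $k=0$ term equals $\binom nm$, and the successive term ratio is $\frac{(n+k+1)(m-k)(n-m-k)}{(k+1)^3}$, so this sum is $\binom nm\cdot{}_3F_2\!\left(\begin{smallmatrix} n+1,\,-m,\,m-n\\ 1,\,1\end{smallmatrix}\,;1\right)$. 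Because the lower parameters satisfy $1+1=1+(n+1)+(-m)+(m-n)$, this terminating ${}_3F_2$ is Saalsch\"utzian, so the Pfaff--Saalsch\"utz theorem evaluates it in closed form as $\binom nm^2$. Multiplying by the prefactor $\binom nm$ recovers $\binom nm^3$, matching the left side, and proves \eqref{2.1}.

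For \eqref{2.2} the analogous manipulation leads (after the duplication formula $\left(\tfrac{n+1}2\right)_k\left(\tfrac{n+2}2\right)_k=4^{-k}(n+1)_{2k}$) to a terminating ${}_3F_2$ at argument $1$ that is \emph{not} balanced, so Pfaff--Saalsch\"utz no longer applies and there is no single classical summation to invoke. Instead I would denote the right-hand side by $R_n$ and prove that it satisfies the \emph{same} recurrence as the Franel numbers, namely $(n+1)^2R_{n+1}=(7n^2+7n+2)R_n+8n^2R_{n-1}$. This recurrence, together with a rational telescoping certificate $G(n,k)$, is exactly the output of creative telescoping (Zeilberger's algorithm, here via the package \texttt{Sigma}): one verifies the certificate identity term-by-term and sums over $k$, the boundary terms vanishing because the factorials in the denominators cut off the support. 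Finally I would check the two initial values $R_0=1=f_0$ and $R_1=-4+6=2=f_1$ directly; induction then forces $R_n=f_n$ for all $n$, which is \eqref{2.2}.

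The easy part is \eqref{2.1}: once the coefficient of $z^m$ is written as a hypergeometric sum, Pfaff--Saalsch\"utz finishes it immediately, and the whole argument is short and checkable by hand. The main obstacle is \eqref{2.2}, where the absence of a balanced (or otherwise directly summable) hypergeometric form means no slick closed-form evaluation is available; the real content is the production and verification of the telescoping certificate yielding the Franel recurrence for $R_n$. A secondary point requiring care is the summation ranges, which are implicitly bounded by the factorial denominators, so that the telescoped boundary terms genuinely vanish and the recurrence is valid from the correct index onward.
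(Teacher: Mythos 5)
This lemma is not proved in the paper at all --- it is quoted from Z.-W. Sun \cite[Lemma 2.2]{S13} --- so your argument is by necessity a different route: you supply an actual proof where the paper outsources one. Your treatment of (\ref{2.1}) is correct and complete: both sides are polynomials of degree $n$ in $z$; writing $\binom{2k}k\binom{3k}k=(3k)!/(k!)^3$ and extracting the coefficient of $z^m$ gives $\sum_k\frac{(n+k)!}{(k!)^3(m-k)!(n-m-k)!}=\binom{n}{m}\,{}_3F_2(n+1,-m,m-n;1,1;1)$, the balance condition $1+1=(n+1)+(-m)+(m-n)+1$ does hold, and Pfaff--Saalsch\"utz evaluates the ${}_3F_2$ to $\frac{(-n)_m\,(n-m+1)_m}{(1)_m\,(-m)_m}=\binom{n}{m}^2$, so the coefficient equals $\binom{n}{m}^3$ as required. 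Your diagnosis of (\ref{2.2}) is also accurate: its right-hand side equals $(-4)^n\,{}_3F_2\bigl(\tfrac{n+1}{2},\tfrac{n+2}{2},-n;1,1;1\bigr)$, which is not balanced (the parameter excess is $\tfrac12$ rather than $1$), so no classical summation theorem applies; proving instead that the right-hand side satisfies Franel's recurrence $(n+1)^2R_{n+1}=(7n^2+7n+2)R_n+8n^2R_{n-1}$ and checking $R_0=1=f_0$, $R_1=2=f_1$ is a legitimate way to finish, and your remark about support is right, since $\binom{n+2k}{3k}=0$ for $k>n$, so the telescoped boundary terms genuinely vanish. The one soft spot is that for (\ref{2.2}) you assert the existence of the telescoping certificate $G(n,k)$ rather than exhibit and verify it, and you must also confirm that creative telescoping really returns this order-two recurrence (if it returns a higher-order one, the argument still works but needs correspondingly more initial values, which should be said); this matches the standard of rigor the paper applies to its own \texttt{Sigma}-found identities, but it is the step carrying the real weight of (\ref{2.2}). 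What your route buys is a self-contained, hand-checkable argument --- especially for (\ref{2.1}), where Pfaff--Saalsch\"utz eliminates any reliance on computer algebra --- whereas the paper's citation buys brevity at the cost of opacity.
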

For $n,m\in\{1,2,3,\ldots\}$, define
$$
H_n^{(m)}=\sum_{1\leq k\leq n}\frac1{k^m},
$$
these numbers with $m=1$ are often called the classic harmonic numbers. 
 
\noindent Recall that the Bernoulli polynomials are given by
$$B_n(x)=\sum_{k=0}^n\bi nkB_kx^{n-k}\ \ (n=0,1,2,\ldots).$$
\begin{lemma}\label{sunh}{\rm (\cite{s2000,s2008})} Let $p>5$ be a prime. Then
\begin{gather*}
H_{p-1}^{(2)}\equiv0\pmod{p},\ \ H_{\frac{p-1}2}^{(2)}\equiv0\pmod{p},\ \ H_{p-1}\equiv0\pmod{p^2},\\
\frac15H_{\lfloor\frac{p}6\rfloor}^{(2)}\equiv H_{\lfloor\frac{p}3\rfloor}^{(2)}\equiv\frac12\left(\frac{p}3\right)B_{p-2}\left(\frac13\right)\pmod p,\\
H_{\lfloor\frac{p}6\rfloor}\equiv-2q_p(2)-\frac32q_p(3)+pq^2_p(2)+\frac{3p}4q^2_p(3)-\frac{5p}{12}\left(\frac{p}3\right)B_{p-2}\left(\frac13\right)\pmod{p^2},\\
H_{\lfloor\frac{p}3\rfloor}\equiv-\frac32q_p(3)+\frac{3p}4q^2_p(3)-\frac{p}6\left(\frac{p}3\right)B_{p-2}\left(\frac13\right)\pmod{p^2},\\
H_{\frac{p-1}2}\equiv-2q_p(2)+pq^2_p(2)\pmod{p^2},\ H_{\lfloor\frac{p}4\rfloor}^{(2)}\equiv(-1)^{\frac{p-1}2}4E_{p-3}\pmod p,\\
H_{\lfloor\frac{2p}3\rfloor}\equiv-\frac32q_p(3)+\frac{3p}4q^2_p(3)+\frac{p}3\left(\frac{p}3\right)B_{p-2}\left(\frac13\right)\pmod{p^2},
\end{gather*}
where $q_p(a)=(a^{p-1}-1)/p$ stands for the Fermat quotient.
\end{lemma}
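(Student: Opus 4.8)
Since the lemma assembles several congruences of Z.-H. Sun, the plan is to recover all of them from two standard mechanisms: reduction of truncated power sums to Bernoulli polynomials via Fermat's little theorem and Faulhaber's formula, and expansion of the identity $2^p-2=\sum_k\bi pk$ and its ternary analogue through Fermat quotients. I would split the eight lines into the mod-$p$ second-order sums and the mod-$p^2$ first-order sums.

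The three ``vanishing/Wolstenholme'' lines come first. Pairing $k$ with $p-k$ gives $H_{p-1}^{(2)}\eq2H_{(p-1)/2}^{(2)}\pmod p$, while summing over a full period lets one replace $k^{-2}$ by $k^{2}$ (inversion permutes the nonzero residues), turning $H_{p-1}^{(2)}$ into $\sum_{k=1}^{p-1}k^2=(p-1)p(2p-1)/6\eq0\pmod p$ for $p>3$; both vanishing statements follow at once. The same pairing carried one more $p$-adic digit, using $\f1k+\f1{p-k}\eq-\f p{k^2}\pmod{p^2}$, reduces $H_{p-1}$ to $-pH_{(p-1)/2}^{(2)}\pmod{p^2}$, which is $\eq0$ by the line just proved.

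Next I would treat the second-order sums attached to $B_{p-2}(1/3)$ and $E_{p-3}$. Writing $k^{-2}\eq k^{p-3}\pmod p$ and applying Faulhaber, $\sum_{k=1}^{N}k^{p-3}=(B_{p-2}(N+1)-B_{p-2})/(p-2)$; here $B_{p-2}=0$ since $p-2$ is odd, and the coefficients of $B_{p-2}(x)$ are $p$-integral (by von Staudt--Clausen, $p$ divides no denominator of $B_j$ for $0<j<p-1$), so $B_{p-2}(N+1)$ depends only on the residue of $N+1$ mod $p$. Taking $N=\lfloor p/3\rfloor$, replacing $N+1$ by its fractional representative, and using $B_{p-2}(1-x)=-B_{p-2}(x)$ pins $H_{\lfloor p/3\rfloor}^{(2)}$ to $\pm\f12B_{p-2}(1/3)$, the sign being exactly $\left(\f p3\right)$ once the two residues of $p$ mod $3$ are seen to shift the floor. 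The factor $5$ relating $H_{\lfloor p/6\rfloor}^{(2)}$ to $H_{\lfloor p/3\rfloor}^{(2)}$, and the Euler-number value $H_{\lfloor p/4\rfloor}^{(2)}\eq(-1)^{(p-1)/2}4E_{p-3}$, would then come from the multiplication formula $B_n(mx)=m^{n-1}\sum_{j=0}^{m-1}B_n(x+j/m)$ (with $m=2$) together with the classical Bernoulli--Euler link at $x=1/4$ and $2^{p-1}\eq1\pmod p$.

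The harder family is the three first-order sums mod $p^2$. The base identity is Glaisher's: expanding $2^p-2=\sum_{k=1}^{p-1}\bi pk$ with $\bi pk=\f pk\bi{p-1}{k-1}\eq\f pk(-1)^{k-1}\pmod{p^2}$ gives $2q_p(2)\eq\sum_{k=1}^{p-1}(-1)^{k-1}/k\eq-H_{(p-1)/2}\pmod p$, and carrying the binomial expansion one $p$-adic order further yields the correction $+pq_p(2)^2$. For $q_p(3)$ and the truncations at $\lfloor p/3\rfloor,\lfloor p/6\rfloor,\lfloor 2p/3\rfloor$ the same scheme runs, but now the floor must be tracked carefully and the mod-$p^2$ remainder left after the Fermat-quotient part is governed precisely by $p$ times the second-order sums of the previous paragraph, which is how $pq_p^2$ and $p\left(\f p3\right)B_{p-2}(1/3)$ enter. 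I expect the main obstacle to be exactly this second-digit bookkeeping: producing the quadratic Fermat-quotient coefficients and the $B_{p-2}(1/3)$-corrections simultaneously with the correct rational constants, where everything depends on $p$ modulo $6$ through the position of the floor. The cleanest route, which I would follow, is to prove once and for all a master mod-$p^2$ congruence for $\sum_{0<k<pa/b}1/k$ (and its second-order analogue), handling $p\eq1$ and $p\eq5\pmod 6$ separately and letting $\left(\f p3\right)$ absorb the sign, and then merely specialise $a/b\in\{1/2,1/3,1/4,1/6,2/3\}$.
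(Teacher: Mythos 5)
The paper offers no proof of this lemma at all: it is quoted directly from Z.-H. Sun's work \cite{s2000,s2008} (and the oldest of these congruences go back to Glaisher and to E. Lehmer \cite{L}), so there is no in-paper argument to compare yours against; any correct proof is necessarily "a different route" from the paper's citation. Judged on its own merits, the first two blocks of your outline are correct and essentially complete. The pairing $k\leftrightarrow p-k$ together with the inversion bijection gives $H_{p-1}^{(2)}\equiv 2H_{(p-1)/2}^{(2)}\equiv 2\sum_{k=1}^{p-1}k^2\equiv0\pmod p$, and the refinement $\frac1k+\frac1{p-k}\equiv-\frac p{k^2}\pmod{p^2}$ then yields $H_{p-1}\equiv-pH_{(p-1)/2}^{(2)}\equiv0\pmod{p^2}$. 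Likewise the chain $H_N^{(2)}\equiv\sum_{k\le N}k^{p-3}\equiv-\frac12B_{p-2}(N+1)\pmod p$ (legitimate because $B_{p-2}(x)$ has $p$-integral coefficients by von Staudt--Clausen), combined with $B_{p-2}(1-x)=-B_{p-2}(x)$, Raabe's multiplication formula for the factor $5$, and $B_{p-2}(1/4)\equiv8E_{p-3}\pmod p$, does produce all three second-order congruences with the stated $\left(\frac p3\right)$ and $(-1)^{(p-1)/2}$ signs; I checked the case analysis on $p$ modulo $3$, $4$, $6$ and it comes out as you claim.

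The gap is exactly where you place it yourself: the three first-order congruences modulo $p^2$ are, in your text, a plan rather than a proof. Your Glaisher expansion gives, correctly, $2q_p(2)\equiv-H_{(p-1)/2}-p\sum_{k=1}^{p-1}\frac{(-1)^{k-1}}kH_{k-1}\pmod{p^2}$, but the entire difficulty of Lehmer's theorem is the evaluation of that weighted sum (to get the coefficient $pq_p(2)^2$), and the situation is worse for the floors $\lfloor p/6\rfloor$, $\lfloor p/3\rfloor$, $\lfloor 2p/3\rfloor$, where the "ternary analogue" must simultaneously produce $\frac{3p}4q_p^2(3)$ and the $B_{p-2}\left(\frac13\right)$ correction with the constants $-\frac{5p}{12}$, $-\frac p6$, $+\frac p3$ depending on the truncation point; this is several pages of bookkeeping in \cite{s2000,s2008} and is not reproduced or replaced by anything in your proposal. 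So as written, the mod-$p$ and Wolstenholme-type statements are proved, but the mod-$p^2$ half of the lemma is only reduced to a (correctly identified) master congruence that you would still have to establish; for the purposes of this paper, citing \cite{s2000,s2008}, as the authors do, is the intended resolution.
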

\begin{lemma}\label{Lem2.2} Let $p>2$ be a prime and $p\equiv1\pmod3$. If $0\leq j\leq (p-1)/2$, then we have
$$
\binom{3j}j\binom{p+j}{3j+1}\equiv \frac{p}{3j+1}(1-pH_{2j}+pH_j)\pmod{p^3}.
$$
\end{lemma}
\begin{proof} If $0\leq j\leq (p-1)/2$ and $j\neq(p-1)/3$, then we have
\begin{align*}
\binom{3j}j\binom{p+j}{3j+1}&=\frac{(p+j)\cdots(p+1)p(p-1)\cdots(p-2j)}{j!(2j)!(3j+1)}\\
&\equiv\frac{pj!(1+pH_j)(-1)^{2j}(2j)!(1-pH_{2j})}{j!(2j)!(3j+1)}\\
&\equiv\frac{p}{3j+1}(1-pH_{2j}+pH_j)\pmod{p^3}.
\end{align*}
If $j=(p-1)/3$, then by Lemma \ref{sunh}, we have
\begin{align*}
&\binom{p-1}{\frac{p-1}3}\binom{p+\frac{p-1}3}{\frac{p-1}3}\\
\equiv&\left(1-pH_{\frac{p-1}3}+\frac{p^2}2(H_{\frac{p-1}3}^2-H_{\frac{p-1}3}^{(2)})\right)\left(1+pH_{\frac{p-1}3}+\frac{p^2}2(H_{\frac{p-1}3}^2-H_{\frac{p-1}3}^{(2)})\right)\\
\equiv&1-p^2H_{\frac{p-1}3}^{(2)}\equiv1-\frac{p^2}2\left(\frac{p}3\right)B_{p-2}\left(\frac13\right)\pmod{p^3}
\end{align*}
and
$$
1-pH_{\frac{2p-2}3}+pH_{\frac{p-1}3}\equiv1-\frac{p^2}2\left(\frac{p}3\right)B_{p-2}\left(\frac13\right)\pmod{p^3}.
$$
Now the proof of Lemma \ref{Lem2.2} is complete.
\end{proof}
\medskip
\noindent{\it Proof of Theorem 1.1}. With the help of (\ref{2.1}), we have
\begin{align}\label{3k4f}
\sum_{k=0}^{p-1}(3k+4)\frac{f_k}{2^k}&=\sum_{k=0}^{p-1}\frac{3k+4}{2^k}\sum_{j=0}^{\lfloor k/2\rfloor}\binom{k+j}{3j}\binom{2j}j\binom{3j}j2^{k-2j}\notag\\
&=\sum_{j=0}^{(p-1)/2}\frac{\binom{2j}j\binom{3j}j}{4^j}\sum_{k=2j}^{p-1}(3k+4)\binom{k+j}{3j}.
\end{align}
By loading the package \texttt{Sigma} in the software \texttt{Mathematica}, we find the following identity:
$$
\sum_{k=2j}^{n-1}(3k+4)\binom{k+j}{3j}=\frac{9nj+3n+9j+5}{3j+2}\binom{n+j}{3j+1}.$$
Thus, replacing $n$ by $p$ in the above identity and then substitute it into (\ref{3k4f}), we have
\begin{equation*}
\sum_{k=0}^{p-1}(3k+4)\frac{f_k}{2^k}=\sum_{j=0}^{(p-1)/2}\frac{\binom{2j}j\binom{3j}j}{4^j}\frac{9pj+3p+9j+5}{3j+2}\binom{p+j}{3j+1}.
\end{equation*}
Hence we immediately obtain the following result by Lemma \ref{Lem2.2},
\begin{equation}\label{main1}
\sum_{k=0}^{p-1}(3k+4)\frac{f_k}{2^k}\equiv p\sum_{j=0}^{(p-1)/2}\frac{\binom{2j}j}{4^j}\frac{9j+5}{(3j+1)(3j+2)}\pmod{p^2}.
\end{equation}
It is easy to verify that
\begin{align}\label{equ}
p\sum_{j=0}^{(p-1)/2}\frac{\binom{2j}j}{4^j}\frac{9j+5}{(3j+1)(3j+2)}\equiv S_1+S_2\pmod{p^2},
\end{align}
where
\begin{align}\label{s1}
S_1= p\sum_{j=0}^{(p-1)/2}\binom{(p-1)/2}j(-1)^j\left(\frac2{3j+1}+\frac1{3j+2}\right)
\end{align}
and
$$
S_2=\frac{3p+2}{p+1}\left(\binom{(2p-2)/3}{(p-1)/3}4^{(1-p)/3}-\binom{(p-1)/2}{(p-1)/3}\right).
$$
Applying the famous identity
\begin{equation}\label{kx}
\sum_{k=0}^n\binom{n}k\frac{(-1)^k}{k+x}=\frac{n!}{x(x+1)\cdots(x+n)}
\end{equation}
with $x=1/3, n=(p-1)/2$ and $x=2/3,n=(p-1)/2$, we may simplify (\ref{s1}) as
$$
S_1=\frac{4p}{3p-1}\frac{(1)_{(p-1)/2}}{(1/3)_{(p-1)/2}}+\frac{2p}{3p+1}\frac{(1)_{(p-1)/2}}{(2/3)_{(p-1)/2}},
$$
where $(a)_n=a(a+1)\cdots(a+n-1)$ is the rising factorial or the Pochhammer symbol.

In view of (\ref{Gammap1xx}), we have
\begin{align*}
&\frac{4p}{3p-1}\frac{(1)_{\frac{p-1}2}}{(1/3)_{\frac{p-1}2}}=\frac{4p}{3p-1}\frac{\Gamma(\frac{p+1}2)\Gamma(\frac13)}{\Gamma(\frac13+\frac{p-1}2)}=\frac{4p}{3p-1}\frac{(-1)^{\frac{p+1}2}\Gamma_p(\frac{p+1}2)\Gamma_p(\frac13)}{(-1)^{\frac{p-1}2}\frac{p}3\Gamma_p(\frac13+\frac{p-1}2)}\\
&=\frac{12}{1-3p}\frac{\Gamma_p(\frac{p+1}2)\Gamma_p(\frac13)}{\Gamma_p(\frac{p}2-\frac16)}=\frac{12(-1)^{\frac{p-1}6}}{1-3p}\Gamma_p\left(\frac{p+1}2\right)\Gamma_p\left(\frac13\right)\Gamma_p\left(\frac76-\frac{p}2\right),
\end{align*}
where $\Gamma(\cdot)$ is the Gamma function. It is known that for $\alpha,s\in\mathbb{Z}_p$, we have
\begin{align}\label{Gammap2}
\Gamma_p(\alpha+ps)\equiv\Gamma_p(\alpha)+ps\Gamma^{'}_p(\alpha)\pmod{p^2}
\end{align}
and
\begin{align}\label{Gammap'}
\frac{\Gamma^{'}_p(\alpha)}{\Gamma_p(\alpha)}\equiv1+H_{p-\langle-\alpha\rangle_p-1}\pmod{p},
\end{align}
where $\Gamma_p'(x)$ denotes the $p$-adic derivative of $\Gamma_p(x)$, $\langle\alpha\rangle_n$
denotes the least non-negative residue of $\alpha$ modulo $n$, i.e., the integer lying in $\{0,1,\ldots,n-1\}$ such that $\langle\alpha\rangle_n\equiv\alpha\pmod{n}$.

\noindent Therefore modulo $p^2$, we have
\begin{align*}
\frac{4p}{3p-1}\frac{(1)_{\frac{p-1}2}}{(\frac13)_{\frac{p-1}2}}\equiv\frac{12(-1)^{\frac{p-1}6}\Gamma_p\left(\frac{1}2\right)\Gamma_p\left(\frac13\right)\Gamma_p\left(\frac76\right)}{1-3p}\left(1+\frac{p}2(H_{\frac{p-1}2}-H_{\frac{p-7}6})\right).
\end{align*}
In view of (\ref{Gammap}) and (\ref{Gammap1xx}), we have
\begin{align*}
\frac{4p}{3p-1}\frac{(1)_{\frac{p-1}2}}{(\frac13)_{\frac{p-1}2}}\equiv\frac{2(1+3p)\Gamma_p\left(\frac{1}2\right)\Gamma_p\left(\frac13\right)}{\Gamma_p\left(\frac56\right)}\left(1+\frac{p}2(H_{\frac{p-1}2}-H_{\frac{p-7}6})\right)\pmod{p^2}.
\end{align*}
And then by using \cite[Proposition 4.1]{YMK}, we have
$$
\frac{\Gamma_p\left(\frac{1}2\right)\Gamma_p\left(\frac13\right)}{\Gamma_p\left(\frac56\right)}\equiv\frac{\binom{(5p-5)/6}{(p-1)/3}}{\left(1+\frac{p}6(5H_{(5p-5)/6}-3H_{(p-1)/2}-2H_{(p-1)/3})\right)}\pmod{p^2}.
$$
Then with the help of \cite[Theorem 4.12]{YMK} and Lemma \ref{sunh}, we have
\begin{equation}\label{4p}
\frac{4p}{3p-1}\frac{(1)_{\frac{p-1}2}}{(1/3)_{\frac{p-1}2}}\equiv4x+3pxq_p(3)-\frac{p}{x}\pmod{p^2}
\end{equation}
and
\begin{equation}\label{2p}
\frac{2p}{3p+1}\frac{(1)_{(p-1)/2}}{(2/3)_{(p-1)/2}}\equiv\frac{p}{x}\pmod{p^2}.
\end{equation}
\noindent Hence
\begin{equation}\label{s1p2}
S_1\equiv 4x+3pxq_p(3)\pmod{p^2}.
\end{equation}
\begin{lemma}\label{mpt} Let $p>3$ be a prime. For any $p$-adic integer $t$, we have 
\begin{equation}\label{3.1}
 \binom{\frac{p-1}2+pt}{\frac{p-1}3}\equiv\binom{\frac{p-1}2}{\frac{p-1}3}\left(1+pt\left(H_{\frac{p-1}2}-H_{\frac{p-1}6}\right)\right)\pmod{p^2}.
 \end{equation}
 \end{lemma}
\begin{proof} Set $m=(p-1)/2$. It is easy to check that
\begin{align*}
\binom{m+pt}{(p-1)/3}&=\frac{(m+pt)\cdots(m+pt-(p-1)/3+1)}{((p-1)/3)!}\\
&\equiv\frac{m\cdots(m-(p-1)/3+1)}{((p-1)/3)!}(1+pt(H_m-H_{m-(p-1)/3})\\
&=\binom{m}{(p-1)/3}(1+pt(H_m-H_{m-(p-1)/3})\pmod{p^2}.
\end{align*}
So Lemma \ref{mpt} is finished.
\end{proof}
\noindent Now we evaluate $S_2$ modulo $p^2$. It is easy to obtain that
\begin{align}\label{s2}
S_2&\equiv2\left(\binom{-\frac12}{(p-1)/3}-\binom{(p-1)/2}{(p-1)/3}\right)\notag\\
&\equiv-p\binom{(p-1)/2}{(p-1)/3}(H_{(p-1)/2}-H_{(p-1)/6})\notag\\
&\equiv-3pxq_p(3)\pmod{p^2}
\end{align}
with the help of Lemma \ref{sunh}, Lemma \ref{mpt} and \cite[Theorem 4.12]{YMK}.

\noindent Therefore, in view of (\ref{main1}), (\ref{equ}), (\ref{s1p2}) and (\ref{s2}), we immediately get the desired result
$$
\frac14\sum_{k=0}^{p-1}(3k+4)\frac{f_k}{2^k}\equiv x\pmod{p^2}.
$$
On the other hand, we use the equation (\ref{2.2}) to obtain that
\begin{align*}
\sum_{k=0}^{p-1}(3k+2)\frac{f_k}{(-4)^k}&=\sum_{k=0}^{p-1}\frac{3k+2}{(-4)^k}\sum_{j=0}^k\binom{k+2j}{3j}\binom{2j}j\binom{3j}j(-4)^{k-j}\\
&=\sum_{j=0}^{p-1}\frac{\binom{2j}j\binom{3j}j}{(-4)^j}\sum_{k=j}^{p-1}(3k+2)\binom{k+2j}{3j}.
\end{align*}
By using the package \texttt{Sigma} again, we find the following identity:
$$
\sum_{k=j}^{n-1}(3k+2)\binom{k+2j}{3j}=\frac{9nj+3n+1}{3j+2}\binom{n+2j}{3j+1}.
$$
Thus,
\begin{equation}\label{3k2fk}
\sum_{k=0}^{p-1}(3k+2)\frac{f_k}{(-4)^k}=\sum_{j=0}^{p-1}\frac{\binom{2j}j\binom{3j}j\binom{p+2j}{3j+1}}{(-4)^j}\frac{9pj+3p+1}{3j+2}.
\end{equation}
\begin{lemma}\label{p2j} Let $p>3$ be a prime and $p\equiv1\pmod3$. If $0\leq j\leq (p-1)/2$ and $j\neq(p-1)/3$, then 
$$
\binom{3j}j\binom{p+2j}{3j+1}\equiv \frac{p(-1)^j}{3j+1}(1+pH_{2j}-pH_j)\pmod{p^3}.
$$
If $(p+1)/2\leq j\leq p-1$, then
$$
\binom{3j}j\binom{p+2j}{3j+1}\equiv \frac{2p(-1)^j}{3j+1}\pmod{p^2}.
$$
\end{lemma}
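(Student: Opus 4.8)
The plan is to expand the product directly, writing
$$
\binom{3j}j\binom{p+2j}{3j+1}=\frac{(p+2j)(p+2j-1)\cdots(p-j)}{j!\,(2j)!\,(3j+1)},
$$
so that the numerator is a block of $3j+1$ consecutive integers running from $p-j$ up to $p+2j$. This mirrors the computation already carried out for Lemma \ref{Lem2.2}, the only change being that the block is centered differently; the whole argument hinges on counting how many multiples of $p$ fall in the interval $[p-j,\,p+2j]$.

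For the first case $0\le j\le (p-1)/2$ we have $2j\le p-1$ and $p-j\ge (p+1)/2$, so the interval $[p-j,\,p+2j]$ lies strictly inside $(0,2p)$ and $p$ is its only multiple of $p$. I would factor this $p$ out, expand the top block as $\prod_{i=1}^{2j}(p+i)\equiv (2j)!\,(1+pH_{2j})\pmod{p^2}$ and the bottom block as $\prod_{i=1}^{j}(p-i)\equiv (-1)^j j!\,(1-pH_j)\pmod{p^2}$, and then cancel $j!\,(2j)!$ against the denominator. The hypothesis $j\ne (p-1)/3$ guarantees $3j+1$ is prime to $p$ (the only multiple of $p$ in the admissible range $1\le 3j+1\le (3p-1)/2$ is $p$ itself, forced by $j=(p-1)/3$), so division by $3j+1$ is legitimate and multiplying the two expansions yields the claimed congruence modulo $p^3$. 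This case is routine, being the exact analogue of Lemma \ref{Lem2.2}.

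The interesting case is $(p+1)/2\le j\le p-1$. Now $2j\ge p+1$, so the interval $[p-j,\,p+2j]$ reaches past $2p$ and contains \emph{two} multiples of $p$, namely $p$ and $2p$; moreover $p-j\ge 1$ and $p+2j<3p$, so these are the only two. Thus the numerator $N$ has $\nu_p(N)=2$, and I would write $N=2p^2M$ with $M$ the product over $[p-j,\,p+2j]$ with the factors $p$ and $2p$ removed. On the other hand $(2j)!$ contributes exactly one factor of $p$ (since $p\le 2j<2p$), so writing $(2j)!=p\,(2j)!^{*}$ gives
$$
\binom{3j}j\binom{p+2j}{3j+1}=\frac{2pM}{j!\,(2j)!^{*}\,(3j+1)},
$$
an expression of $p$-adic valuation exactly $1$; and again $3j+1$ is prime to $p$, since $3j+1=2p$ forces $j=(2p-1)/3$, which is not an integer when $p\equiv1\pmod3$, while the other multiples $p,3p$ lie outside the range of $3j+1$. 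It then suffices to prove $M\equiv (-1)^j\,j!\,(2j)!^{*}\pmod p$, after which multiplying back by $2p/(3j+1)$ turns this mod $p$ congruence into the desired $\frac{2p(-1)^j}{3j+1}\pmod{p^2}$.

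This last residue computation is where Wilson's theorem enters and is the main technical point. Splitting $M$ into the three consecutive blocks $[p-j,\,p-1]$, $[p+1,\,2p-1]$, $[2p+1,\,p+2j]$ gives, modulo $p$, the factors $(-1)^j j!$, $(p-1)!\equiv -1$, and $(2j-p)!$ respectively, so $M\equiv -(-1)^j j!\,(2j-p)!\pmod p$; and splitting $(2j)!^{*}$ into $[1,p-1]$ and $[p+1,2j]$ gives $(p-1)!\,(2j-p)!\equiv -(2j-p)!\pmod p$. Dividing, the factors $j!\,(2j-p)!$ together with the two minus signs cancel to leave exactly $(-1)^j$, which is precisely the claim. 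The only delicate point throughout is the accounting of the $p$-adic valuations in the two cases; once the count of multiples of $p$ in $[p-j,\,p+2j]$ is pinned down, everything reduces to Wilson's theorem and the same $1\pm pH$ expansion already used for Lemma \ref{Lem2.2}.
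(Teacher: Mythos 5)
Your proof is correct and follows essentially the same route as the paper's: both expand $\binom{3j}j\binom{p+2j}{3j+1}$ as a block of $3j+1$ consecutive integers from $p-j$ to $p+2j$ over $j!\,(2j)!\,(3j+1)$, count the multiples of $p$ in that block (one in the first case, two in the second), extract them, and reduce the remaining factors modulo $p$ (with the $1\pm pH$ refinement modulo $p^2$ in the first case). The only cosmetic difference is in the second case, where you evaluate $M$ and $(2j)!/p$ separately via Wilson's theorem; the paper instead cancels the blocks $(p+1)\cdots(2j)$ and $(p-1)!$ against $(2j)!/p$ directly, so Wilson's theorem is never actually needed, but the net computation is identical.
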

\begin{proof}If $0\leq j\leq (p-1)/2$ and $j\neq(p-1)/3$, then we have
\begin{align*}
\binom{3j}j\binom{p+2j}{3j+1}&=\frac{(p+2j)\cdots(p+1)p(p-1)\cdots(p-j)}{j!(2j)!(3j+1)}\\
&\equiv\frac{p(2j)!(1+pH_{2j})(-1)^{j}(j)!(1-pH_j)}{j!(2j)!(3j+1)}\\
&\equiv\frac{p(-1)^j}{3j+1}(1+pH_{2j}-pH_j)\pmod{p^3}.
\end{align*}
If $(p+1)/2\leq j\leq p-1$, then
\begin{align*}
&\binom{3j}j\binom{p+2j}{3j+1}\\
&=\frac{(p+2j)\cdots(2p+1)(2p)(2p-1)\cdots(p+1)p(p-1)\cdots(p-j)}{j!(2j)!(3j+1)}\\
&\equiv\frac{2p^2(2j)\cdots(p+1)(p-1)!(-1)^{j}(j)!}{j!(2j)!(3j+1)}=\frac{2p(-1)^j}{3j+1}\pmod{p^2}.
\end{align*}
Now the proof of Lemma \ref{p2j} is complete.
\end{proof}
It is known that $\binom{2k}k\equiv0\pmod p$ for each $(p+1)/2\leq k\leq p-1$, and it is easy to check that for each $0\leq j\leq (p-1)/2$,
$$
\binom{3j}j\binom{p+2j}{3j+1}\equiv \frac{p(-1)^j}{3j+1}\pmod{p^2}.
$$
These, with (\ref{3k2fk}) yield that
\begin{align}\label{main2}
&\sum_{k=0}^{p-1}(3k+2)\frac{f_k}{(-4)^k}\equiv\sum_{j=0}^{\frac{p-1}2}\frac{\binom{2j}j}{(-4)^j}\frac{p(-1)^j}{3j+1}\frac{9pj+3p+1}{3j+2}+\sum_{j=\frac{p+1}2}^{p-1}\frac{\binom{2j}j}{(-4)^j}\frac{2p(-1)^j}{3j+1}\frac{1}{3j+2}\notag\\
&\equiv\sum_{j=0}^{\frac{p-1}2}\binom{\frac{p-1}2}j\frac{p(-1)^j}{3j+1}\frac{1}{3j+2}+S_3\notag\\
&=p\sum_{j=0}^{\frac{p-1}2}\binom{\frac{p-1}2}j(-1)^j\left(\frac{1}{3j+1}-\frac{1}{3j+2}\right)+S_3\pmod{p^2},
\end{align}
where
\begin{align*}
S_3&=\binom{\frac{2p-2}3}{\frac{p-1}3}\frac1{(p+1)4^{\frac{p-1}3}}-\binom{\frac{p-1}2}{\frac{p-1}3}\frac1{p+1}-\binom{\frac{4p-4}3}{\frac{2p-2}3}\frac1{4^{\frac{2p-2}3}}\\
&=\frac1{p+1}\left(\binom{-1/2}{(p-1)/3}-\binom{\frac{p-1}2}{\frac{p-1}3}\right)-\binom{-1/2}{(2p-2)/3}.
\end{align*}
In the same way of above, with (\ref{kx}), (\ref{4p}), (\ref{2p}), Lemma \ref{sunh} and \cite[Theorem 4.12]{YMK}, we have the following congruence modulo $p^2$
\begin{align}\label{3j13j2}
p\sum_{j=0}^{\frac{p-1}2}\binom{\frac{p-1}2}j(-1)^j\left(\frac{1}{3j+1}-\frac{1}{3j+2}\right)\equiv2x+\frac{3px}2q_p(3)-\frac{3p}{2x}.
\end{align}
Now we evaluate $S_3$. It is easy to see that
\begin{align*}
\binom{-1/2}{(2p-2)/3}&=\frac{(-\frac12)(-\frac12-1)\cdots(-\frac12-\frac{2p-2}3+1)}{(\frac{2p-2}3)!}\\
&=\frac{(\frac12)(\frac32)\cdots(\frac{p}2-1)\frac{p}2(\frac{p}2+1)\cdots(\frac{p}2+\frac{p-7}6)}{(\frac{2p-2}3)!}\\
&=\frac{(\frac{p}2-\frac{p-1}2)\cdots(\frac{p}2-1)\frac{p}2(\frac{p}2+1)\cdots(\frac{p}2+\frac{p-7}6)}{(\frac{2p-2}3)!}\\
&\equiv\frac{(-1)^{\frac{p-1}2}\frac{p}2(\frac{p-1}2)!(\frac{p-7}6)!}{(\frac{2p-2}3)!}=\frac{(-1)^{\frac{p-1}2}3p}{p-1}\frac1{\binom{\frac{2p-2}3}{\frac{p-1}2}}\\
&\equiv\frac{-3p(-1)^{(p-1)/2}}{\binom{\frac{2p-2}3}{\frac{p-1}2}}\pmod{p^2}.
\end{align*}
In view of (\ref{s2}) and \cite[Theorem 4.12]{YMK}, we immediately obtain that
$$
S_3\equiv-\frac{3px}2q_p(3)+\frac{3p}{2x}\pmod{p^2}.
$$
This, with (\ref{main2}) and (\ref{3j13j2}) yields that
$$
\frac12\sum_{k=0}^{p-1}(3k+2)\frac{f_k}{(-4)^k}\equiv x\pmod{p^2}
$$
Now the proof of Theorem \ref{Th1.1} is complete.\qed
\section{Proof of Theorem 1.2}
 \setcounter{lemma}{0}
\setcounter{theorem}{0}
\setcounter{corollary}{0}
\setcounter{remark}{0}
\setcounter{equation}{0}
\setcounter{conjecture}{0}
\noindent {\it Proof of Theorem \ref{Th1.2}}. With the help of (\ref{2.1}), we have
\begin{align}\label{4f}
\sum_{k=0}^{p-1}\frac{f_k}{2^k}&=\sum_{k=0}^{p-1}\frac{1}{2^k}\sum_{j=0}^{\lfloor k/2\rfloor}\binom{k+j}{3j}\binom{2j}j\binom{3j}j2^{k-2j}\notag\\
&=\sum_{j=0}^{(p-1)/2}\frac{\binom{2j}j\binom{3j}j}{4^j}\sum_{k=2j}^{p-1}\binom{k+j}{3j}.
\end{align}
By loading the package \texttt{Sigma} in the software \texttt{Mathematica}, we have the following identity:
$$
\sum_{k=2j}^{n-1}\binom{k+j}{3j}=\binom{n+j}{3j+1}.$$
Thus, replace $n$ by $p$ in the above identity and then substitute it into (\ref{4f}), we have
\begin{equation*}
\sum_{k=0}^{p-1}\frac{f_k}{2^k}=\sum_{j=0}^{(p-1)/2}\frac{\binom{2j}j\binom{3j}j}{4^j}\binom{p+j}{3j+1}.
\end{equation*}
Hence we immediately obtain the following result by Lemma \ref{Lem2.2},
\begin{equation}\label{zmain1}
\sum_{k=0}^{p-1}\frac{f_k}{2^k}\equiv p\sum_{j=0, j\neq\frac{p-1}3}^{(p-1)/2}\frac{\binom{2j}j}{4^j}\frac{1-pH_{2k}+pH_k}{(3j+1)}+S_1\pmod{p^3},
\end{equation}
where $$S_1=\frac{\binom{\frac{2p-2}3}{\frac{p-1}3}\binom{p-1}{\frac{p-1}3}\binom{p+\frac{p-1}3}{p}}{4^{\frac{p-1}3}}=\binom{-\frac12}{\frac{p-1}3}\binom{p-1}{\frac{p-1}3}\binom{p+\frac{p-1}3}{p}.$$
It is easy to verify that
\begin{align*}
&p\sum_{j=0, j\neq\frac{p-1}3}^{(p-1)/2}\frac{\binom{2j}j}{4^j}\frac{1-pH_{2k}+pH_k}{(3j+1)}\\
&\equiv p\sum_{j=0, j\neq\frac{p-1}3}^{(p-1)/2}\frac{\binom{\frac{p-1}2}j(-1)^j(1-pH_{2k}+pH_k)}{(3j+1)\left(1-p\sum_{r=1}^j\frac1{2r-1}\right)}\\
&\equiv p\sum_{j=0}^{(p-1)/2}\frac{\binom{\frac{p-1}2}j(-1)^j\left(1+\frac{p}2H_k\right)}{(3j+1)}-S_2\pmod{p^3},
\end{align*}
where
\begin{align*}
S_2= \binom{\frac{p-1}2}{\frac{p-1}3}\left(1+\frac{p}2H_{\frac{p-1}3}\right).
\end{align*}
So
\begin{align}\label{main}
\sum_{k=0}^{p-1}\frac{f_k}{2^k}\equiv p\sum_{j=0}^{(p-1)/2}\frac{\binom{\frac{p-1}2}j(-1)^j\left(1+\frac{p}2H_k\right)}{(3j+1)}+S_1-S_2\pmod{p^3}.
\end{align}
It is easy to see that
\begin{align}\label{2p3p-11p-12}
\frac{2p}{3p-1}\frac{(1)_{\frac{p-1}2}}{(\frac13)_{\frac{p-1}2}}=\frac{(\frac{p-1}2)!}{\frac13\cdots(\frac{p}3-1)(\frac{p}3+1)\cdots(\frac{p}3+\frac{p-1}6)}\equiv\binom{\frac{p-1}2}{\frac{p-1}3}\pmod{p}.
\end{align}
On the other hand. We have
\begin{align*}
\sum_{k=0}^{p-1}\frac{f_k}{(-4)^k}&=\sum_{k=0}^{p-1}\frac1{(-4)^k}\sum_{j=0}^k\binom{k+2j}{3j}\binom{2j}j\binom{3j}j(-4)^{k-j}\\
&=\sum_{j=0}^{p-1}\frac{\binom{2j}{j}\binom{3j}j}{(-4)^{j}}\sum_{k=j}^{p-1}\binom{k+2j}{3j}=\sum_{j=0}^{p-1}\frac{\binom{2j}{j}\binom{3j}j}{(-4)^{j}}\binom{p+2j}{3j+1}.
\end{align*}
So by Lemma \ref{p2j} and the fact that for each $0\leq k\leq (p-1)/2$,
\begin{gather*}
\frac{\binom{2k}k}{(-4)^k}\equiv\frac{\binom{(p-1)/2}k}{(1-p\sum_{j=1}^k\frac1{2j-1})}\pmod{p^2},
j\binom{2j}j\binom{2p-2j}{p-j}\equiv2p\pmod{p^2}
\end{gather*}
 for each $(p+1)/2\leq j\leq p-1$, we have the following congruence modulo $p^3$,
\begin{align*}
&\sum_{k=0}^{p-1}\frac{f_k}{(-4)^k}-S_3\equiv p\sum_{j=0\atop j\neq\frac{p-1}3}^{\frac{p-1}2}\frac{\binom{2j}{j}(1+pH_{2j}-pH_j)}{(3j+1)4^{j}}+2p\sum_{j=\frac{p+1}2}^{p-1}\frac{\binom{2j}{j}}{(3j+1)4^{j}}\\
&\equiv\sum_{j=0\atop j\neq\frac{p-1}3}^{\frac{p-1}2}\frac{p(-1)^j\binom{\frac{p-1}2}{j}\left(1+2pH_{2j}-\frac32pH_j\right)}{3j+1}+\sum_{j=\frac{p+1}2}^{p-1}\frac{4p^2 }{4^{j}(3j+1)j\binom{2p-2j}{p-j}}\\
&\equiv\sum_{j=0}^{\frac{p-1}2}\frac{p(-1)^j\binom{\frac{p-1}2}{j}\left(1+2pH_{2j}-\frac32pH_j\right)}{3j+1}+\sum_{j=1}^{\frac{p-1}2}\frac{p^24^j }{(3j-1)j\binom{2j}{j}}-S_4,
\end{align*}
where
\begin{gather*}
S_3=\frac{\binom{\frac{2p-2}3}{\frac{p-1}3}\binom{p-1}{\frac{p-1}3}\binom{p+\frac{2p-2}{3}}{p}}{(-4)^{\frac{p-1}3}}=\binom{-\frac12}{\frac{p-1}3}\binom{p-1}{\frac{p-1}3}\binom{p+\frac{2p-2}{3}}{p},\\
S_4=\binom{\frac{p-1}2}{\frac{p-1}3}\left(1+2pH_{\frac{2p-2}3}-\frac32pH_{\frac{p-1}3}\right).
\end{gather*}
Hence we have
\begin{align}\label{zhuyao}
&\sum_{k=0}^{p-1}\frac{f_k}{(-4)^k}-\sum_{k=0}^{p-1}\frac{f_k}{2^k}\notag\\
\equiv&2p^2\sum_{j=0}^{\frac{p-1}2}\frac{\binom{\frac{p-1}2}{j}(-1)^j(H_{2j}-H_j)}{3j+1}+S_5+\sum_{j=1}^{\frac{p-1}2}\frac{p^24^j }{(3j-1)j\binom{2j}{j}}\pmod{p^3},
\end{align}
where $$S_5=S_3-S_4+S_2-S_1.$$
By \texttt{Sigma}, we can find and prove the following identity:
\begin{align}\label{zhuid}
&\sum_{j=0}^{n}\frac{2\binom{n}{j}(-1)^j(H_{2j}-H_j)}{3j+1}\notag\\
&=\frac1{3n+1}\prod_{k=1}^n\frac{3k}{3k-2}\left(\sum_{k=1}^n\frac1k\prod_{j=1}^k\frac{3j-2}{3j}-\sum_{k=1}^n\frac1k\prod_{j=1}^k\frac{2(3j-2)}{3(2j-1)}\right)\notag\\
&=\frac{(1)_{n}}{(3n+1)\left(\frac13\right)_n}\left(\sum_{k=1}^n\frac{\left(\frac13\right)_k}{k(1)_k}-\sum_{k=1}^n\frac{\left(\frac13\right)_k}{k\left(\frac12\right)_k}\right).
\end{align}
In view of \cite[Lemma 3.1]{sijnt} and Lemma \ref{sunh}, we have
\begin{align}\label{-1/31}
\sum_{k=1}^{\frac{p-1}2}\frac{\left(\frac13\right)_k}{k(1)_k}=\sum_{k=1}^{\frac{p-1}2}\frac{\binom{-1/3}{k}}{k\binom{-1}k}\equiv\frac32q_p(3)-\frac{3p}4q^2_p(3)
-\frac{p}3\sum_{k=1}^{\frac{p-1}3}\frac{4^k}{k^2\binom{2k}k}\pmod{p^2}.
\end{align}
\begin{align}\label{-1/3-1/2}
\sum_{k=1}^{\frac{p-1}2}\frac{\left(\frac13\right)_k}{k\left(\frac12\right)_k}&=\sum_{k=1}^{\frac{p-1}2}\frac{\binom{-1/3}{k}}{k\binom{-1/2}k}\equiv\frac{4p}3(-1)^{\frac{p-1}2}E_{p-3}+\frac32q_p(3)-\frac{3p}4q^2_p(3)\notag\\
&-\frac{2p}3(-1)^{\frac{p-1}2}\sum_{k=1}^{\frac{p-1}3}\frac{4^k}{(2k-1)k\binom{2k}k}\pmod{p^2}.
\end{align}
It is easy to check that
\begin{align}\label{2k-1k}
\sum_{k=1}^{\frac{p-1}3}\frac{4^k}{(2k-1)k\binom{2k}k}=2\sum_{k=1}^{\frac{p-1}3}\frac{4^k}{(2k-1)\binom{2k}k}-\sum_{k=1}^{\frac{p-1}3}\frac{4^k}{k\binom{2k}k}.
\end{align}
And by \cite[(6)]{swz}, we have
\begin{equation}\label{cyid}
\frac1{\binom{n+1+k}k}=(n+1)\sum_{r=0}^n\binom{n}r(-1)^r\frac{1}{k+r+1}.
\end{equation}
\begin{align}\label{p-132k-1}
&2\sum_{k=1}^{\frac{p-1}3}\frac{4^k}{(2k-1)\binom{2k}k}\equiv2\sum_{k=1}^{\frac{p-1}3}\frac{(-1)^k}{(2k-1)\binom{\frac{p-1}2}k}\equiv(-1)^{\frac{p+1}2}\sum_{k=\frac{p-1}6}^{\frac{p-3}2}\frac{(-1)^k}{(k+1)\binom{\frac{p-1}2}{k}}\notag\\
&=(-1)^{\frac{p+1}2}\left(\sum_{k=0}^{\frac{p-3}2}\frac{(-1)^k}{(k+1)\binom{\frac{p-1}2}{k}}-\sum_{k=0}^{\frac{p-7}6}\frac{(-1)^k}{(k+1)\binom{\frac{p-1}2}{k}}\right)\pmod{p}.
\end{align}
By \texttt{Sigma}, we find the following identity which can be proved by induction on $n$:
\begin{equation*}
\sum_{k=0}^{n}\frac{(-1)^k}{(k+1)\binom{n}{k}}=\frac{2(-1)^n-1}{n+1}-(n+1)H_n^{(2)}-2(n+1)\sum_{k=1}^n\frac{(-1)^k}{k^2}.
\end{equation*}
So by setting $n=(p-1)/2$ in the above identity, we have
\begin{align}\label{p-32}
\sum_{k=0}^{\frac{p-3}2}\frac{(-1)^k}{(k+1)\binom{\frac{p-1}2}{k}}\equiv2\left((-1)^{\frac{p-1}2}-1\right)-(-1)^{\frac{p-1}2}2E_{p-3}\pmod p.
\end{align}
And by (\ref{cyid}), we have
\begin{align*}
&\sum_{k=0}^{\frac{p-7}6}\frac{(-1)^k}{(k+1)\binom{\frac{p-1}2}{k}}\equiv\sum_{k=0}^{\frac{p-7}6}\frac{1}{(k+1)\binom{\frac{p-1}2+k}{k}}\\
&=\sum_{k=0}^{\frac{p-7}6}\frac{1}{k+1}\frac{p-1}2\sum_{r=0}^{\frac{p-3}2}\binom{\frac{p-3}2}{r}(-1)^r\frac1{k+r+1}\\
&\equiv-\frac12\sum_{k=1}^{\frac{p-1}6}\frac{1}{k}\sum_{r=0}^{\frac{p-3}2}\binom{\frac{p-3}2}{r}(-1)^r\frac1{k+r}\\
&=-\frac12H_{\frac{p-1}6}^{(2)}-\frac12\sum_{r=1}^{\frac{p-3}2}\frac{(-1)^r}r\binom{\frac{p-3}2}r\sum_{k=1}^{\frac{p-1}6}\left(\frac1k-\frac1{k+r}\right)\pmod p.
\end{align*}
It is easy to check that
$$
H_{\frac{p-1}6}-\sum_{k=1}^{\frac{p-1}6}\frac1{k+r}\equiv-\sum_{k=1}^r\frac1{k(6k-1)}\pmod p.
$$
And by \texttt{Sigma} again, we have
$$
\sum_{r=1}^{n}\frac{(-1)^r}r\binom{n}r\sum_{k=1}^r\frac1{k(6k-1)}=H_n^{(2)}-\sum_{k=1}^n\frac{(1)_k}{k\left(\frac56\right)_k}.
$$
So by Lemma (\ref{sunh}) and \cite{YMK}, we have
\begin{align*}
&\sum_{k=0}^{\frac{p-7}6}\frac{(-1)^k}{(k+1)\binom{\frac{p-1}2}{k}}\\
&\equiv\frac{(-1)^{\frac{p-1}2}}{x}-2-\frac54\left(\frac{p}3\right)B_{p-2}\left(\frac13\right)-\frac12\sum_{k=1}^{\frac{p-1}2}\frac{(-1)^k}{k^2\binom{-\frac56}{k}}\pmod p.
\end{align*}
Again, by (\ref{cyid}), we have
\begin{align*}
&\sum_{k=1}^{\frac{p-1}2}\frac{(-1)^k}{k^2\binom{-\frac56}{k}}=-\frac65\sum_{k=1}^{\frac{p-1}2}\frac{(-1)^k}{k\binom{-\frac{11}6}{k-1}}\equiv\frac65\sum_{k=0}^{\frac{p-3}2}\frac{(-1)^k}{(k+1)\binom{\frac{5p-11}6}{k}}\\
&=\frac65\sum_{k=0}^{\frac{p-3}2}\frac{1}{(k+1)\binom{\frac{p+5}6+k}{k}}=\frac65\sum_{k=0}^{\frac{p-3}2}\frac1{k+1}\frac{p+5}6\sum_{r=0}^{\frac{p-1}6}(-1)^r\binom{\frac{p-1}6}r\frac1{k+1+r}\\
&\equiv\sum_{k=1}^{\frac{p-1}2}\frac1{k}\sum_{r=0}^{\frac{p-1}6}(-1)^r\binom{\frac{p-1}6}r\frac1{k+r}\\
&=H_{\frac{p-1}2}^{(2)}+\sum_{r=1}^{\frac{p-1}6}\frac{(-1)^r}r\binom{\frac{p-1}6}r\sum_{k=1}^{\frac{p-1}2}\left(\frac1k-\frac1{k+r}\right)\pmod p.
\end{align*}
Also it is easy to see that
$$
H_{\frac{p-1}2}-\sum_{k=1}^{\frac{p-1}2}\frac1{k+r}\equiv-\sum_{k=1}^r\frac1{k(2k-1)}\pmod p.
$$
And by \texttt{Sigma}, we have
$$
\sum_{r=1}^{n}\frac{(-1)^r}r\binom{n}r\sum_{k=1}^r\frac1{k(2k-1)}=H_n^{(2)}-\sum_{k=1}^n\frac{4^k}{k^2\binom{2k}k}.
$$
So by Lemma \ref{sunh}, we have
$$
\sum_{k=1}^{\frac{p-1}2}\frac{(-1)^k}{k^2\binom{-\frac56}{k}}\equiv\sum_{k=1}^{\frac{p-1}6}\frac{4^k}{k^2\binom{2k}k}-\frac52\left(\frac{p}3\right)B_{p-2}\left(\frac13\right)\pmod p.
$$
Hence
\begin{equation*}
\sum_{k=0}^{\frac{p-7}6}\frac{(-1)^k}{(k+1)\binom{\frac{p-1}2}{k}}\equiv\frac{(-1)^{\frac{p-1}2}}{x}-2-\frac12\sum_{k=1}^{\frac{p-1}6}\frac{4^k}{k^2\binom{2k}k}\pmod p.
\end{equation*}
This, with (\ref{p-132k-1}) and (\ref{p-32}) yields that
\begin{align}\label{p-13}
&2\sum_{k=1}^{\frac{p-1}3}\frac{4^k}{(2k-1)}\notag\\
\equiv&-2+\frac1x+2E_{p-3}-\frac12(-1)^{\frac{p-1}2}\sum_{k=1}^{\frac{p-1}6}\frac{4^k}{k^2\binom{2k}k}\pmod p.
\end{align}
By \texttt{Sigma}, we have
\begin{equation}\label{heng}
\sum_{k=1}^n\frac{4^k}{k\binom{2k}k}=-2+2\frac{4^n}{\binom{2n}n}.
\end{equation}
So by \cite{YMK}, we have
$$
\sum_{k=1}^{\frac{p-1}3}\frac{4^k}{k\binom{2k}k}\equiv-2+\frac{2}{\binom{\frac{p-1}2}{\frac{p-1}3}}\equiv-2+\frac1x\pmod p.
$$
This, with (\ref{2k-1k}) and (\ref{p-13}) yields that
\begin{align*}
\sum_{k=1}^{\frac{p-1}3}\frac{4^k}{(2k-1)k\binom{2k}k}\equiv2E_{p-3}-\frac12(-1)^{\frac{p-1}2}\sum_{k=1}^{\frac{p-1}6}\frac{4^k}{k^2\binom{2k}k}\pmod p.
\end{align*}
This with (\ref{-1/3-1/2}) yields that
\begin{align}\label{p-121312}
\sum_{k=1}^{\frac{p-1}2}\frac{\left(\frac13\right)_k}{k\left(\frac12\right)_k}\equiv\frac32q_p(3)-\frac{3p}4q^2_p(3)+\frac{p}3\sum_{k=1}^{\frac{p-1}6}\frac{4^k}{k^2\binom{2k}k}\pmod{p^2}.
\end{align}
So by (\ref{-1/31}), we have
$$
\sum_{k=1}^{\frac{p-1}2}\frac{\left(\frac13\right)_k}{k(1)_k}-\sum_{k=1}^{\frac{p-1}2}\frac{\left(\frac13\right)_k}{k\left(\frac12\right)_k}\equiv-\frac{p}3\left(\sum_{k=1}^{\frac{p-1}3}\frac{4^k}{k^2\binom{2k}k}+\sum_{k=1}^{\frac{p-1}6}\frac{4^k}{k^2\binom{2k}k}\right)\pmod{p^2}.
$$
Therefore, by (\ref{zhuid}) and (\ref{2p3p-11p-12}), we have
\begin{align}\label{diyige}
&2p^2\sum_{j=0}^{\frac{p-1}2}\frac{\binom{\frac{p-1}2}{j}(-1)^j(H_{2j}-H_j)}{3j+1}\notag\\
&\equiv-\frac{p^2}3\binom{\frac{p-1}2}{\frac{p-1}3}\left(\sum_{k=1}^{\frac{p-1}3}\frac{4^k}{k^2\binom{2k}k}+\sum_{k=1}^{\frac{p-1}6}\frac{4^k}{k^2\binom{2k}k}\right)\pmod{p^3}.
\end{align}
Now we evaluate the second sum in the right side of (\ref{zhuyao}). It is easy to see that
\begin{equation}\label{3j-1}
\sum_{j=1}^{\frac{p-1}2}\frac{4^j }{(3j-1)j\binom{2j}{j}}=3\sum_{j=1}^{\frac{p-1}2}\frac{4^j }{(3j-1)\binom{2j}{j}}-\sum_{j=1}^{\frac{p-1}2}\frac{4^j }{j\binom{2j}{j}}.
\end{equation}
It is easy to see from (\ref{heng}) that
\begin{align}\label{easy}
\sum_{j=1}^{\frac{p-1}2}\frac{4^j }{j\binom{2j}{j}}\equiv-2+2(-1)^{\frac{p-1}2}\pmod p.
\end{align}
Now we consider the first sum of the right side in (\ref{3j-1}).
$$
\sum_{j=1}^{\frac{p-1}2}\frac{4^j }{(3j-1)\binom{2j}{j}}=\sum_{j=1}^{\frac{p-1}3}\frac{4^j }{(3j-1)\binom{2j}{j}}+\sum_{j=\frac{p+2}3}^{\frac{p-1}2}\frac{4^j }{(3j-1)\binom{2j}{j}}.
$$
The following identity is very import to us:
\begin{equation}\label{important}
\sum_{k=1}^n\frac{4^k}{(k+n)\binom{2k}k}=-2+2\frac{4^n}{\binom{2n}n}-\frac{n\binom{2n}n}{4^n}\sum_{k=1}^n\frac{4^k}{k^2\binom{2k}k}.
\end{equation}
This, with \cite{YMK} yields that
\begin{align}\label{p-133j-1}
&3\sum_{j=1}^{\frac{p-1}3}\frac{4^j }{(3j-1)\binom{2j}{j}}\equiv\sum_{j=1}^{\frac{p-1}3}\frac{4^j }{\left(j+\frac{p-1}3\right)\binom{2j}{j}}\notag\\
\equiv&-2+\frac{2}{\binom{-1/2}{\frac{p-1}3}}+\frac13\binom{-1/2}{\frac{p-1}3}\sum_{k=1}^{\frac{p-1}3}\frac{4^k}{k^2\binom{2k}k}\notag\\
&\equiv-2+\frac1x+\frac13\binom{\frac{p-1}2}{\frac{p-1}3}\sum_{k=1}^{\frac{p-1}3}\frac{4^k}{k^2\binom{2k}k}\pmod{p}.
\end{align}
It is easy to check that And by (\ref{important}), we have
\begin{align}\label{p+2/3}
&3\sum_{j=\frac{p+2}3}^{\frac{p-1}2}\frac{4^j}{(3j-1)\binom{2j}{j}}\equiv3\sum_{j=0}^{\frac{p-7}6}\frac{(-1)^{\frac{p-1}2-j}}{(3(\frac{p-1}2-j)-1)\binom{\frac{p-1}2}{j}}\notag\\
&\equiv6(-1)^{\frac{p+1}2}\sum_{j=0}^{\frac{p-7}6}\frac{4^j }{(6j+5)\binom{2j}{j}}\equiv(-1)^{\frac{p+1}2}\sum_{j=0}^{\frac{p-7}6}\frac{(-1)^j }{(j+\frac{p+5}6)\binom{\frac{p-1}2}{j}}\notag\\
&\equiv\frac65(-1)^{\frac{p+1}2}+(-1)^{\frac{p+1}2}\sum_{j=1}^{\frac{p+5}6}\frac{4^j }{(j+\frac{p+5}6)\binom{2j}{j}}+\frac3{\binom{\frac{p-1}2}{\frac{p-1}3}}\pmod p.
\end{align}
By (\ref{important}) and \cite{YMK}, we have
$$
\sum_{j=1}^{\frac{p+5}6}\frac{4^j }{(j+\frac{p+5}6)\binom{2j}{j}}\equiv-\frac{16}5+\frac{5(-1)^{\frac{p-1}6}}{2x}-\frac{(-1)^{\frac{p-1}6}}{6x}\sum_{k=1}^{\frac{p-1}6}\frac{4^k}{k^2\binom{2k}k}\pmod p.
$$
This, with (\ref{p+2/3}) yields that
\begin{align*}
3\sum_{j=\frac{p+2}3}^{\frac{p-1}2}\frac{4^j}{(3j-1)\binom{2j}{j}}\equiv2(-1)^{\frac{p-1}2}-\frac1x+\frac13\binom{\frac{p-1}2}{\frac{p-1}3}\sum_{k=1}^{\frac{p-1}6}\frac{4^k}{k^2\binom{2k}k}\pmod p.
\end{align*}
Combining this with (\ref{p-133j-1}), we have
\begin{align*}
&3\sum_{j=1}^{\frac{p-1}2}\frac{4^j }{(3j-1)\binom{2j}{j}}\\
&\equiv-2+2(-1)^{\frac{p-1}2}+\frac13\binom{\frac{p-1}2}{\frac{p-1}3}\left(\sum_{k=1}^{\frac{p-1}3}\frac{4^k}{k^2\binom{2k}k}+\sum_{k=1}^{\frac{p-1}6}\frac{4^k}{k^2\binom{2k}k}\right)\pmod p.
\end{align*}
Thus, by (\ref{3j-1}) and (\ref{easy}), we have
$$
\sum_{j=1}^{\frac{p-1}2}\frac{4^j }{(3j-1)j\binom{2j}{j}}\equiv\frac13\binom{\frac{p-1}2}{\frac{p-1}3}\left(\sum_{k=1}^{\frac{p-1}3}\frac{4^k}{k^2\binom{2k}k}+\sum_{k=1}^{\frac{p-1}6}\frac{4^k}{k^2\binom{2k}k}\right)\pmod p.
$$
This, with (\ref{zhuyao}) and (\ref{diyige}) yields that
\begin{equation}\label{zhup3}
\sum_{k=0}^{p-1}\frac{f_k}{(-4)^k}-\sum_{k=0}^{p-1}\frac{f_k}{2^k}\equiv S_5\pmod{p^3}.
\end{equation}
While
\begin{align*}
S_5=&\binom{-\frac12}{\frac{p-1}3}\binom{p-1}{\frac{p-1}3}\left(\binom{p+\frac{2p-2}3}{\frac{p-1}3}-\binom{p+\frac{p-1}3}{\frac{p-1}3}\right)\\
&+2p\binom{\frac{p-1}2}{\frac{p-1}3}\left(H_{\frac{p-1}3}-H_{\frac{2p-2}3}\right).
\end{align*}
It is easy to check that
$$
\binom{p+\frac{2p-2}3}{\frac{p-1}3}\equiv1+pH_{\frac{2p-2}3}+\frac{p^2}2\left(H_{\frac{2p-2}3}^2-H_{\frac{2p-2}3}^{(2)}\right)\pmod{p^3}
$$
and
$$
\binom{p+\frac{p-1}3}{\frac{p-1}3}\equiv1+pH_{\frac{p-1}3}+\frac{p^2}2\left(H_{\frac{p-1}3}^2-H_{\frac{p-1}3}^{(2)}\right)\pmod{p^3}.
$$
So by Lemma \ref{sunh} and the fact that $H_{p-1-k}^{(2)}\equiv-H_k^{(2)}\pmod p$ for each $0\leq k\leq p-1$, we have
\begin{align*}
\binom{p+\frac{2p-2}3}{\frac{p-1}3}-\binom{p+\frac{p-1}3}{\frac{p-1}3}&\equiv p(H_{\frac{2p-2}3}-H_{\frac{p-1}3})+\frac{p^2}2(H_{\frac{p-1}3}^{(2)}-H_{\frac{2p-2}3}^{(2)})\\
&\equiv p^2\left(\frac{p}3\right)B_{p-2}\left(\frac13\right)\pmod{p^3}
\end{align*}
and
$$
2p\left(H_{\frac{p-1}3}-H_{\frac{2p-2}3}\right)\equiv-p^2\left(\frac{p}3\right)B_{p-2}\left(\frac13\right)\pmod{p^3}.
$$
So by $\binom{-\frac12}{\frac{p-1}3}\equiv\binom{\frac{p-1}2}{\frac{p-1}3}\pmod{p}$ and $\binom{p-1}{\frac{p-1}3}\equiv(-1)^{\frac{p-1}3}=1\pmod p$, we can immediately obtain that
$$
S_5\equiv0\pmod{p^3}.
$$
This, with (\ref{zhup3}) yields that
$$
\sum_{k=0}^{p-1}\frac{f_k}{(-4)^k}\equiv\sum_{k=0}^{p-1}\frac{f_k}{2^k}\pmod{p^3}.
$$
Now the proof of Theorem \ref{Th1.2} is complete.\qed

\Ack. The first author is funded by the National Natural Science Foundation of China (12001288) and China Scholarship Council (202008320187).

     \end{document}